\newtheorem{theorem}{Theorem}[section]
\newtheorem{lemma}[theorem]{Lemma}
\newtheorem{proposition}[theorem]{Proposition}
\newtheorem{conjecture}[theorem]{Conjecture}
\theoremstyle{definition}
\newtheorem{definition}[theorem]{Definition}
\newtheorem{example}[theorem]{Example}
\newtheorem{note}[theorem]{Note}
\begin{document}

\title[Intermediate Growth]
{Iterated monodromy groups of intermediate growth}

\author[Dougherty]{Ashley S. Dougherty}
\address{Department of Education, Department of Mathematics, Kutztown University,  
Kutztown, PA 19530}
\email{gffadougherty@yahoo.com}

\author[Kindelin]{Lydia R. Kindelin}
\address{Department of Mathematics, University of Dayton, 
Dayton, OH 45302}
\email{kindelin1@notes.udayton.edu}

\author[Reaves]{Aaron M. Reaves}
\address{Department of Mathematics, Morehouse College, 
Atlanta, GA 30314}
\email{reavesam@gmail.com}

\author[Walker]{Andrew J. Walker}
\address{Department of Mathematics, University of California, 
Riverside, CA 92521}
\email{walker@math.ucr.edu}

\author[Zakahi]{Nathaniel F. Zakahi}
\address{Department of Mathematics and Computer Science, Denison University,
Granville, OH 43023}
\email{zakahi\_n@denison.edu}

\begin{abstract}
We give new examples of groups of intermediate growth, by a method that was first developed by Grigorchuk and later adapted by Bux and P\'{e}rez.
Our examples are the groups generated by the automata with the kneading sequences
$11(0)^{\omega}$ and $0(011)^{\omega}$. By results of Nekrashevych, both of these groups are iterated monodromy groups
of complex post-critically finite quadratic polynomials. 

We include a complete, systematic description of Bux and P\'{e}rez's adaptation of Grigorchuk's method. We also prove, as a sample application of this method, 
that the groups
determined by the automata with kneading sequence 
$1(0^{k})^{\omega}$ ($k \geq 2$) 
have intermediate growth, although this
result is implicit in a survey article by Bartholdi, Grigorchuk, and Sunik.

The paper concludes with an example of a group with no admissible length function; i.e., the group in question admits no length 
function with the properties required by the arguments of Bux and P\'{e}rez. Whether the group has intermediate growth appears to be an 
open question.
\end{abstract}

\subjclass[2000]{20F65, 37F20}

\keywords{iterated monodromy group, intermediate growth, automaton}

\maketitle

\section{Introduction} \label{section:intro}

Let $p: \mathbb{C} \rightarrow \mathbb{C}$ be a complex polynomial. We say that $p$ is \emph{post-critically finite} if, for each
critical point $c$, the set of all forward iterates $\{ p(p(p \ldots (c))) \}$ of $c$ is a finite set. Nekrashevych \cite{N} has shown how to 
associate a group, called an \emph{iterated monodromy group}, to any post-critically finite complex polynomial. The iterated monodromy group
of $p$, denoted $\mathrm{IMG}(p)$, acts on an infinite rooted $n$-ary tree if the degree of $p$ is $n$.

We first began this project because of our interest in the following conjecture from \cite{BP}, where it is attributed to Nekrashevych:
\begin{conjecture} \label{conj:Nek}
If $p: \mathbb{C} \rightarrow \mathbb{C}$ is a post-critically finite quadratic polynomial with pre-periodic kneading
sequence, then $\mathrm{IMG}(p)$ has intermediate growth.
\end{conjecture}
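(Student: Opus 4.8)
The plan is to prove both bounds --- superpolynomial below and subexponential above --- uniformly across all strictly pre-periodic kneading sequences, rather than case by case. For the lower bound I would first invoke Nekrashevych's theory: for a strictly pre-periodic (Misiurewicz) quadratic, $\mathrm{IMG}(p)$ is an infinite, finitely generated, contracting, level-transitive self-similar group acting on the binary tree, and it is weakly branch. An infinite weakly branch group is never virtually nilpotent, so Gromov's polynomial-growth theorem immediately excludes polynomial growth; alternatively the standard self-similar lower-bound construction for contracting branch groups yields $\gamma_G(n) \succeq \exp(c\sqrt{n})$ directly. Either way the lower bound is structural and needs no computation specific to the kneading sequence.

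The substance is the subexponential upper bound, via Grigorchuk's contraction method as organized by Bux and P\'erez. I would write $p = z^2 + c$ with kneading sequence $u\,w^{\omega}$, where $u$ is the nonempty pre-period and $w$ the period, and read off Nekrashevych's automaton: a root-level rotation $a$ together with generators $b_1,\dots,b_r$ indexed by the post-critical points, each with a two-section wreath recursion dictated by the positions of the symbols of $u\,w^{\omega}$. The decisive qualitative feature of strict pre-periodicity is the \emph{tail}: the generator attached to the critical value lies off the periodic cycle, and its recursion feeds into generators strictly closer to the cycle. I would exploit this by seeking a positive weight vector $(\ell(b_1),\dots,\ell(b_r))$, extended to a length function $\ell$ on the whole group, so that passing from any $g$ to its two sections gives $\ell(g|_0)+\ell(g|_1) \le \lambda\,\ell(g)+C$ for a fixed $\lambda<1$ --- exactly the admissibility condition. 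Feasibility of such weights is a finite linear-algebraic problem: find a positive subinvariant vector for the nonnegative transition matrix $M$ recording how each generator's weight splits across its sections, where the tail forces $M$ to lose mass along the pre-periodic part. Once $\lambda<1$ is secured, Grigorchuk's counting argument --- iterating the contraction down $n$ levels and bounding the number of distinct section-tuples --- yields $\gamma_G(n) \preceq \exp(n^{\alpha})$ with $\alpha=\alpha(\lambda)<1$.

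The hard part will be establishing that contraction with $\lambda<1$ can always be arranged for strictly pre-periodic kneading sequences, and this is exactly where the method as literally stated can break down: the group exhibited in the final section of this paper admits \emph{no} admissible length function, so a single global $\ell$ cannot be expected to work across all cases. My plan to overcome this is to replace one-step contraction by contraction over a bounded number of levels matched to the period. Since the recursion is eventually periodic with period $|w|$, I would analyze the product of the $|w|$ transition matrices around the cycle together with the mass irreversibly lost through the tail of $u$, and show that the resulting multi-level contraction coefficient is strictly below one --- a weaker and more robust requirement than one-step admissibility, equivalently realized by level-dependent weights or by a length function on the self-similarity groupoid rather than on the group itself. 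Reducing the conjecture to a finite, checkable positivity condition on these period-length matrix products, and then proving that condition holds for every strictly pre-periodic kneading sequence (including those where the naive length function fails), is the crux on which the full conjecture turns.
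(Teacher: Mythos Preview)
The statement you are attempting to prove is not a theorem of the paper; it is Conjecture~\ref{conj:Nek}, and the paper explicitly records that it is \emph{false}. Immediately after stating it, the authors explain that the tuning of $z^{2}-1$ by $z^{2}+i$ produces a post-critically finite quadratic polynomial $g$ with pre-periodic kneading sequence such that $\mathrm{IMG}(z^{2}-1)$ embeds in $\mathrm{IMG}(g)$; since Grigorchuk and \v{Z}uk showed $\mathrm{IMG}(z^{2}-1)$ has exponential growth, so does $\mathrm{IMG}(g)$. Thus no uniform proof of subexponential growth across all pre-periodic kneading sequences can exist, and your plan must fail somewhere.

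The failure is in the third paragraph. You correctly note that a single admissible length function need not exist, and you propose to rescue the argument by passing to multi-level contraction over a block of $|w|$ levels. But the counterexample $g$ above shows that no such contraction coefficient strictly below one can be obtained even over arbitrarily many levels: if it could, the Bux--P\'erez/Grigorchuk machinery would force subexponential growth, contradicting the embedding of an exponential-growth group. Concretely, renormalizable (tuned) Misiurewicz parameters can carry, inside their automaton, a subautomaton equivalent to that of a periodic polynomial like $z^{2}-1$, and on that piece the section map preserves total weighted length no matter how you choose weights or how many levels you iterate. Your reduction to a ``finite, checkable positivity condition on period-length matrix products'' is therefore not a condition that holds for every strictly pre-periodic kneading sequence; it fails exactly at the renormalizable ones. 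This is why the paper replaces Conjecture~\ref{conj:Nek} by Conjecture~\ref{conj:improvedNek}, adding the hypothesis of non-renormalizability, and then only proves individual cases rather than the full refined conjecture.
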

The first positive evidence was obtained by Bux and P\'{e}rez \cite{BP}, who showed that 
$\mathrm{IMG}( z^{2} + i)$ has subexponential growth.
(The proof that $\mathrm{IMG}(z^{2} + i)$ also has superpolynomial growth is comparatively straightforward, so their work
proves that $\mathrm{IMG}(z^{2} + i)$ has intermediate growth.) There are known counterexamples, however. 
Grigorchuk and Zuk \cite{GZ} showed that 
$\mathrm{IMG}(z^{2}-1)$ has exponential growth. The tuning \cite{Dou87} of $z^{2} - 1$ by $z^{2}+i$ results in a post-critically finite
quadratic polynomial $g(z) = z^{2} + c$
with pre-periodic kneading sequence such that $\mathrm{IMG}(z^{2}-1)$ embeds in $\mathrm{IMG}(g)$. It follows easily that $\mathrm{IMG}(g)$
also has exponential growth, making it a counterexample to Conjecture \ref{conj:Nek}. (Note that $z^{2}-1$ has a periodic kneading sequence, so
it is not a counterexample in itself.) The following conjecture appears to be open:
\begin{conjecture} \label{conj:improvedNek}
If $p: \mathbb{C} \rightarrow \mathbb{C}$ is a non-renormalizable post-critically finite quadratic polynomial with pre-periodic kneading
sequence, then $\mathrm{IMG}(p)$ has intermediate growth.
\end{conjecture}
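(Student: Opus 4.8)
The plan is to show, as in the treatment of $\mathrm{IMG}(z^{2}+i)$ in \cite{BP}, that such a group has both superpolynomial and subexponential growth, the subexponential bound being the real work. Because $p$ is quadratic, $\mathrm{IMG}(p)$ acts faithfully on the binary rooted tree and is finitely generated by the automaton read off from the pre-periodic kneading sequence; superpolynomial growth should follow by the now-standard argument that such a group is infinite and not virtually nilpotent, so by Gromov's theorem it cannot have polynomial growth. The heart of the matter is the upper bound.

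For the subexponential bound I would follow Grigorchuk's method as systematized by Bux and P\'{e}rez. Each generator $g$ has a wreath decomposition $g=(g_{0},g_{1})\sigma$ into its two sections $g_{0},g_{1}$ and a root permutation $\sigma$; iterating to level $n$ expresses $g$ through its $2^{n}$ sections $g|_{v}$. The goal is to produce an \emph{admissible length function} $\ell$ on the group: a subadditive, generator-bounded length for which there exist an $n$ and a $\lambda<1$ with $\sum_{v}\ell(g|_{v})\le \lambda\,\ell(g)+C$, the sum taken over the $2^{n}$ vertices $v$ at level $n$. Granting such an $\ell$, Grigorchuk's counting argument bounds the number of group elements of length at most $R$ by $\exp(R^{\alpha})$ for some $\alpha<1$, giving subexponential growth.

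The main obstacle --- and the reason Conjecture~\ref{conj:improvedNek} appears to be open --- is constructing $\ell$ uniformly from the kneading sequence. For individual sequences such as $11(0)^{\omega}$, $0(011)^{\omega}$, or the family $1(0^{k})^{\omega}$, one assigns explicit weights to the generators and verifies the contraction inequality by a finite computation on the automaton. A uniform argument would have to read a contracting weighting directly off an arbitrary non-renormalizable pre-periodic kneading sequence. The non-renormalizability hypothesis is precisely what rules out the tuning and self-embedding of \cite{Dou87} that plants an exponential-growth subgroup such as $\mathrm{IMG}(z^{2}-1)$ inside $\mathrm{IMG}(p)$; morally it should force the sections to be genuinely shorter, and hence $\ell$ to contract. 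Translating this combinatorial primitivity into a quantitative contraction estimate is the crux, and it may fail outright: the concluding example of this paper exhibits a group of this shape admitting no admissible length function at all, so any proof of the full conjecture must either show that non-renormalizability precludes that pathology or else replace the length-function machinery with a growth estimate that survives its absence.
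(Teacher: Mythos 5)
You have not given a proof, and none could be expected: the statement you were assigned is Conjecture~\ref{conj:improvedNek}, which the paper itself states is open. The paper offers no proof of it anywhere --- it only supplies evidence, namely the subexponential (hence intermediate) growth of the groups of the automata with kneading sequences $1(0^{k})^{\omega}$, $11(0)^{\omega}$, and $0(011)^{\omega}$, each handled by an explicit, hand-tailored choice of weights $\widehat{\ell}$, transversal $T$, and finite list of good-word families, followed by the bad-word count required by Theorem~\ref{thm:intgrowth}. Your proposal correctly reconstructs this machinery in outline and, to your credit, correctly identifies the crux (a uniform construction of an admissible length function from an arbitrary non-renormalizable pre-periodic kneading sequence) and even cites the paper's closing example ($01(10)^{\omega}$, Section~\ref{section:final}) showing that an admissible length function can fail to exist altogether. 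But identifying the obstruction is not overcoming it: nowhere do you produce the uniform weighting, nor show that non-renormalizability excludes the pathology of Section~\ref{section:final}, nor offer a substitute estimate. So the proposal is a research program, not a proof, and it leaves the conjecture exactly where the paper leaves it.

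Two technical corrections to your sketch of the machinery. First, the paper's superpolynomial lower bound is not obtained via Gromov's theorem (which would itself require proving the groups are not virtually nilpotent); it follows from the observation that each group $G$ is commensurable with $G \times G$, citing \cite{GP}. Second, admissibility in the paper (Definition~\ref{def:admissible}) is the per-word inequality $|t| + |w| \geq |w_{0}| + |w_{1}|$ for $w \in T$, a non-strict condition on first-line productions of the chosen representatives --- not a global contraction $\sum_{v} \ell(g|_{v}) \leq \lambda\,\ell(g) + C$ valid for all group elements. The strict contraction of Proposition~\ref{proposition:bigun} and Theorem~\ref{theorem:final} is established only for words that are $\epsilon$-good with respect to a finite set of good words, and subexponential growth then follows because the counting argument of Theorem~\ref{thm:intgrowth} shows $\epsilon$-good words form a positive proportion of each ball, at which point Proposition~10 of \cite{BP} applies. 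Demanding contraction on all elements, as your formulation does, is strictly stronger than what the method needs, and conflating the two would obscure why the bad-word bound ($\mathcal{U}$-bad words of each length being uniformly bounded in number) is the actual verification burden in each of the paper's examples.
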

The hypothesis of non-renormalizability rules out the counterexamples to Conjecture \ref{conj:Nek} that arise from tuning.

Our goal here is to give two more examples in support of the latter conjecture, namely the iterated monodromy groups of 
polynomials with the kneading sequences $11(0)^{\omega}$ and $0(011)^{\omega}$. (The kneading sequence of a kneading automaton
is described in Definition \ref{def:kneadingsequence}; 
Theorem \ref{thm:kneadingsequence} says that the latter definition agrees with the classical definition of the kneading sequence of
a polynomial up to relabeling.) 
We also give a short proof that the groups generated by the automata with the 
kneading sequences of the form $1(0^{k})^{\omega}$ have intermediate growth, although a proof that these groups have intermediate growth
can be obtained from Theorem 10.5 of \cite{BGS}. A secondary goal is to provide an exposition of the methods of Bux and P\'{e}rez. While formulated differently, the key idea in their arguments, Proposition 10 of \cite{BP}, is based upon the work of Grigorchuk in his proof that the First Grigorchuk group has subexponential growth \cite{FG}. We attempt to isolate
the precise hypotheses that are necessary to make their arguments work, and state general theorems. The main result in this direction 
is Theorem \ref{thm:intgrowth}, which gives a simple sufficient condition for the group of a kneading automaton over an alphabet with two letters
to have subexponential growth. 

The paper is structured as follows. In Section \ref{section:background}, 
we review the definition of automata, and explain how an automaton can be used to define
a group that acts by automorphisms on a rooted tree. Section \ref{section:buxperez} 
contains an exposition of Bux and P\'{e}rez's formulation of Grigorchuk's method. 
Section \ref{section:main} contains proofs that the groups determined by the automata with the kneading sequences $1(0^{k})^{\omega}$
($k \geq 1$), $11(0)^{\omega}$, and $0(011)^{\omega}$ have intermediate growth. In Section \ref{section:final}, 
we give an example of a group defined by an automaton with pre-periodic kneading sequence to 
which this method does not apply. (Specifically, the group in question 
has no admissible length function -- see Definition \ref{def:admissible}.) 

All of the results in Sections \ref{section:main} and \ref{section:final} 
were proved by the authors at SUMSRI, an REU program based at Miami University, 
during the summer of 2011. The authors also produced an article as part of the REU, which can be found at 
www.units.muohio.edu/sumsri/sumj/2011/fp\_alg.pdf . 
The material in Section \ref{section:buxperez} was contributed by Daniel Farley. 

The authors would like to thank Rodrigo P\'{e}rez for clarifying the status of Conjecture \ref{conj:Nek} to us.

\section{Background} \label{section:background}

Essentially all of the material in this section has appeared in \cite{N}. We gather it here (sometimes in slightly altered form)
for the reader's convenience.

\subsection{Automata, Moore Diagrams, Trees} \label{subsection:amt}

\begin{definition}
Let $X$ be an alphabet. An \emph{automaton} $A$ over $X$ is given by:
\begin{enumerate}
\item a set of \emph{states}, usually also denoted $A$;
\item a map $\tau: A \times X \rightarrow X \times A$.
\end{enumerate}
For each state $a \in A$, we define a function $\tau_{a}: X \rightarrow X$
by the rule $\tau_{a}(x) = \pi_{1}(\tau(a,x))$, where $\pi_{1}: X \times A \rightarrow X$
is projection on the first coordinate. If $\tau_{a} \neq \mathrm{id}_{X}$, 
we say that $a$ is an \emph{active state}.
We say that the automaton $A$ is \emph{invertible} 
if each $\tau_{a}: X \rightarrow X$ is a bijection. 
\end{definition}

Automata can be conveniently described using Moore diagrams.

\begin{definition}
Let $A$ be an automaton over the alphabet $X$. The \emph{Moore diagram} for $A$ is a directed
labelled graph $\Gamma$, defined as follows. The vertices of $\Gamma$ are the states of
$A$. If $a,b \in A$ and $\tau(a,x) = (y,b)$, then there is a directed edge from $a$ to $b$ 
with the label $(x,y)$.
\end{definition}

\begin{example} \label{example:moore}
We define an automaton $A$ as follows. The states are $a$, $b$, $t$, and $id$. The alphabet 
is $X = \{ 0, 1 \}$. We define the function $\tau: A \times X \rightarrow X \times A$ by the
rule:

\vspace{10pt}

\begin{center}
\begin{tabular}{lll}
$\tau(a,0) = (0,id)$ & \quad & $\tau(b,0) = (0,b)$ \\
$\tau(a,1) = (1,t)$ & \quad & $\tau(b,1) = (1,a)$ \\
$\tau(t,0) = (1,id)$ & \quad & $\tau(id,0) = (0,id)$ \\
$\tau(t,1) = (0,id)$ & \quad & $\tau(id,1) = (1,id)$ 
\end{tabular}
\end{center}

\vspace{10pt}

The Moore diagram for this automaton is pictured in Figure \ref{figure:1}.

\begin{figure} [!h] 
\begin{center}
\includegraphics{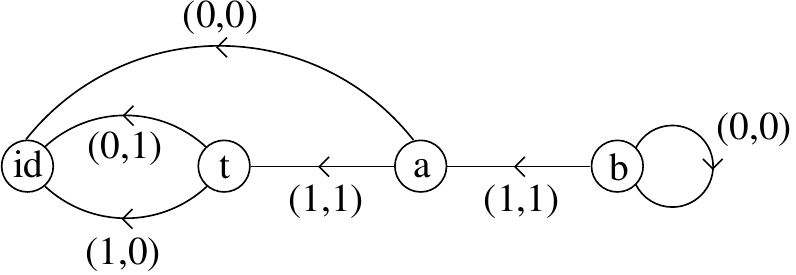}
\end{center}
\caption{The Moore diagram for the automaton from Example \ref{example:moore}.}
\label{figure:1}
\end{figure}

\end{example}

\begin{definition}
Let $X$ be an alphabet. We let $X^{\ast}$ denote the free monoid generated by $X$. Thus, $X^{\ast}$ is the 
collection of all positive strings in the letters of $X$, including the empty string. We can associate
to $X^{\ast}$ a tree, which we also denote $X^{\ast}$, as follows: the vertices of the tree are the 
members of $X^{\ast}$, and there is an edge connecting two vertices $w_{1}, w_{2} \in X^{\ast}$ if and only
if $w_{2} = w_{1}x$, for some $x \in X$ (or $w_{1} = w_{2}x$, for some $x \in X$).
\end{definition}

It is easy to see that if $|X| = n$, then $X^{\ast}$ is a complete rooted $n$-ary tree, i.e., there
is a root vertex, $\emptyset$, of degree $n$, and all other vertices have degree $n+1$.

\subsection{The group determined by an automaton} \label{subsection:ga}

We will now explain how an invertible automaton over $X$ determines
a group of automorphisms of the tree $X^{\ast}$.

\begin{definition}
Let $A$ be an automaton over $X$. Let $a \in A$ and $x \in X$. We define
$a_{\mid x} = \pi_{2} \tau(a,x)$, where $\pi_{2}: X \times A \rightarrow A$
is projection on the second factor.
\end{definition}

For each $a \in A$, we can define a function $a: X^{\ast} \rightarrow X^{\ast}$ by
induction on the length of a word $w \in X^{\ast}$. If $|w| = 0$ (so $w$ is the null string),
then we set $a(w) = w$. If $|w| > 0$, then we can write $w=xw_{1}$, for some $x \in X$ and $w_{1} \in X^{\ast}$.
We define $a(w) = \tau_{a}(x) a_{\mid x}(w_{1})$.

It is easy to see that the function $a: X^{\ast} \rightarrow X^{\ast}$ preserves parents and children: if
$w_{2} = w_{1}x$ for some $x \in X$ (so $w_{2}$ is a child of $w_{1}$), then $a(w_{2})$ is also a child
of $a(w_{1})$. If the automaton $A$ is invertible then $a: X^{\ast} \rightarrow X^{\ast}$ is an automorphism
for each $a \in A$ (\cite{N}; pg. 7).

\begin{definition}
Let $A$ be an invertible automaton. The group defined by $A$, $G(A)$, is the subgroup of $\mathrm{Aut}(X^{\ast})$ 
generated by:
$$ \{ a: X^{\ast} \rightarrow X^{\ast} \mid a \in A \}.$$
\end{definition}

While our definition of $a_{\mid x}$ (for $a \in A$ and $x \in X$) is good enough to define the action 
of $G(A)$ on its associated tree, we will often need a definition of $w_{\mid x}$, where $w \in A^{\ast}$
and $x \in X$.

\begin{definition}
Let $a_{1}a_{2}\ldots a_{n} \in A^{\ast}$. For $x \in X$, we define $(a_{1}a_{2}\ldots a_{n})_{\mid x}$
in $A^{\ast}$ by the rule:

$$ (a_{1}a_{2}\ldots a_{n})_{\mid x} = a_{1 \mid \tau_{a_{1}}\tau_{a_{2}}\ldots \tau_{a_{n}}(x)} \ldots
a_{n-1 \mid \tau_{a_{n}}(x)}a_{n \mid x}.$$
\end{definition}

\begin{note}
An easy way to compute $(a_{1}a_{2}\ldots a_{n})_{\mid x}$ is as follows.
Form the concatenation $a_{1}a_{2}\ldots a_{n} x$ (in $(A \cup X)^{\ast}$)
and then regard the function $\tau: A \times X \rightarrow X \times A$ as a rule
telling us to replace a substring $\widehat{a} \widehat{x}$
($\widehat{a} \in A$, $\widehat{x} \in X$) by the string $\widetilde{x} \widetilde{a}$,
where $\tau(\widehat{a}, \widehat{x}) = (\widetilde{x}, \widetilde{a})$. Once
we have rewritten the original word $a_{1}a_{2} \ldots a_{n}x$ in the form
$y \widehat{a}_{1}\ldots \widehat{a}_{n}$, where $y \in X$, it follows that
$(a_{1}\ldots a_{n})_{\mid x} = \widehat{a}_{1} \ldots \widehat{a}_{n}$.

For instance, if $A$ is the automaton from Example \ref{example:moore}, then 
$$ abta \cdot 1 = abt1t = ab0(id)t = a0b(id)t = 0(id)b(id)t.$$
We omit occurrences of the identity state (Definition \ref{def:identity}) from $(a_{1}a_{2} \ldots a_{n})_{\mid x}$. (Note also
that $(a_{1}a_{2} \ldots a_{n})_{\mid x}$ is a string in $(A - \{ id \})^{\ast}$, not an element of a group -- this distinction is
important in Definition \ref{def:planar}.) 
It follows that $abta_{\mid 1} = bt$.
\end{note}

\begin{definition}
Let $G$ be a group defined by an automaton $A$. We let $G_{n}$ denote the \emph{$n$th level stabilizer}, defined as follows:
$$ G_{n} = \{ g \in G \mid g \cdot w = w, \text{ for all } w \in X^{\ast} \text{ such that } |w| \leq n \}.$$
\end{definition}

\begin{definition}
Let $g_{0}, g_{1}, \ldots, g_{n-1}$ be automorphisms of the tree $X^{\ast}$, where $|X| = n$. Assume, without loss of generality,
that $X = \{ 0, 1, \ldots, n-1 \}$. We let $g = (g_{0}, g_{1}, \ldots, g_{n-1})$ be the automorphism defined by the rule
$g(jw) = j g_{j}(w)$, for $j \in X$ and $w \in X^{\ast}$. (Thus, $g$ fixes the top level of the tree and acts like the
automorphism $g_{j}$ on the $j$th branch from the root.)
\end{definition}

Note that if $G$ is a group defined by an automaton $A$ over $X = \{ 0, 1, \ldots, n-1 \}$, and $g \in G_{1}$ can be represented
by a word in $(A- \{ id \})^{\ast}$, then
$g = (g_{\mid 0}, \ldots, g_{\mid n-1})$. We will make extensive use of this fact in subsequent sections.

We will always work with reduced automata.

\begin{definition}
An automaton $A$ is \emph{reduced} if different states $a$ of $A$ induce different functions $a: X^{\ast} \rightarrow X^{\ast}$.
\end{definition}

Any automaton can be reduced, i.e., there is an algorithm which finds a reduced automaton whose states define the same set
of functions $a: X^{\ast} \rightarrow X^{\ast}$ as the given automaton (\cite{N}; pg. 8).

\begin{definition} \label{def:identity}
A state $a$ is called an \emph{identity state} if $a: X^{\ast} \rightarrow X^{\ast}$ is the identity automorphism.
\end{definition}

It is clear from the definition that a reduced automaton can have at most one identity state.

\subsection{Kneading automata of quadratic polynomials} \label{subsection:kneading}

Throughout this section, we assume that $X$ is a two-letter alphabet. The definitions in this section are all drawn from \cite{N}.
In most cases, our definitions look simpler than the ones in \cite{N} because we are restricting our attention to a two-letter
alphabet, while the discussion in \cite{N} is more general. Note in particular that Definition \ref{def:kneadingautomaton}
would be an incorrect definition of kneading automata if we replaced $X$ with a larger set. 

\begin{definition} \label{def:kneadingautomaton} (\cite{N}; pg. 167)
Let $A$ be an invertible reduced automaton over $X = \{ 0, 1 \}$. We say that $A$ is a \emph{kneading automaton}
if
\begin{enumerate}
\item there is only one active state;
\item in the Moore diagram of $A$, each non-identity state has exactly one incoming arrow;
\item at most one outgoing arrow from the active state leads to a non-identity state.
\end{enumerate}
\end{definition}

\begin{definition} \label{def:planar}(\cite{N}; pg. 177)
A kneading automaton $A$ over $X= \{ 0, 1 \}$ is \emph{planar}
if there is some linear ordering $a_{1} \ldots a_{m}$ of
the non-trivial states of $A$ such that
$((a_{1}\ldots a_{m})^{2})_{\mid x}$ is a cyclic shift of 
$a_{1}\ldots a_{m}$ for each $x \in X$.
\end{definition}
 
If $A$ is a kneading automaton over the two-letter alphabet $X$, then there are two general forms that $A$ might take. 
Consider the result of deleting the identity state and all arrows in the Moore diagram for $A$ that lead to the identity state.
It is not too difficult to see that the resulting directed graph $\Gamma_{A}$
is topologically either a circle, or a circle with a
sticker ($[0,1]$) attached at one of its ends. (In the latter case, the active state of $A$ is the unique vertex of degree $1$.)
It is also clearly possible to reconstruct the Moore diagram of $A$ from $\Gamma_{A}$ (since all of the arrows that are missing
from $\Gamma_{A}$ must lead to the identity state).

\begin{definition} (\cite{N}; pg. 183) \label{def:kneadingsequence}
Let $A$ be a kneading automaton. We define the \emph{kneading sequence}
of $A$ as follows.

If $\Gamma_{A}$ is topologically a circle with $m$ edges, then the kneading sequence for $A$ has the form 
$(\ell_{1} \ell_{2} \ldots \ell_{m})^{\omega}$, where $\ell_{1}$ is the label of the (unique) arrow leading from $a_{1}$ (say)
into the active state, $\ell_{2}$ is the label of the arrow leading from $a_{2}$ into $a_{1}$, and so forth, so that
$\ell_{m}$ leads from the active state into $a_{m-1}$. (In other words, we trace the arrows backwards from the active state, 
while recording the labels in the order that they are encountered. When we reach the active state again, having recorded
the string $\ell_{1} \ell_{2} \ldots \ell_{m}$, we define the kneading sequence to be $(\ell_{1} \ldots \ell_{m})^{\omega}$.)

If $\Gamma_{A}$ is topologically a circle with a sticker, then we similarly trace the arrows backward from the active state (which
is necessarily the unique vertex of degree $1$ in $\Gamma_{A}$) and record the labels. The kneading sequence takes the form $u(v)^{\omega}$, 
where $u$ is the (non-empty) label of the sticker, and $v$ is the label of the circle. The latter label $v$ is read from $\Gamma_{A}$
in essentially the same way as before.

We abbreviate the four possible labels $(0,0)$, $(1,1)$, $(0,1)$, and $(1,0)$
by $0$, $1$, $\ast_{0}$, and $\ast_{1}$ (respectively).
\end{definition}

\begin{example}
The automaton $A$ in Example \ref{example:moore} is a kneading automaton. Its associated graph $\Gamma_{A}$ is topologically a
circle with a sticker. If we trace the
arrows backwards from $t$ to $a$, then to $b$, and then to $b$ again,
we read the labels $(1,1)$, $(1,1)$, and $(0,0)$ (respectively).
Since following the arrow backwards from $b$ leads to $b$, the kneading sequence repeats after this. It follows that
the kneading sequence is $11(0)^{\omega}$.
\end{example}

\begin{note}
It is straightforward to check that a kneading automaton can be recovered from its kneading sequence. Note
also that the automaton with the kneading sequence $1(0)^{\omega}$ is different from the automaton with
the kneading sequence $1(00)^{\omega}$ (for example).
\end{note}

\begin{definition} \label{def:preperiodic} (\cite{N}; pg. 184)
We say that a kneading sequence is \emph{pre-periodic}
if it has the form $u(v)^{\omega}$, where $u, v \in X^{\ast}$ are non-trivial strings. (That is, 
if the graph $\Gamma_{A}$ is topologically a circle with a sticker attached.)
\end{definition}

We use a characterization of ``bad isotropy groups" on page 184 of \cite{N} as a definition:

\begin{definition}
A kneading automaton $A$ over $X = \{ 0, 1 \}$ has \emph{bad isotropy groups} if and only if its kneading sequence is pre-periodic
and the word $v$ from Definition \ref{def:preperiodic} is a proper power.
\end{definition}

\begin{definition}
A complex polynomial $p: \mathbb{C} \rightarrow \mathbb{C}$ is called \emph{post-critically finite}
if for each critical point $c$ (i.e., $p'(c) = 0$), the set
$$ \{ p(p(\ldots p(c))) \}$$
is finite.
\end{definition}

\begin{theorem} \label{thm:kneadingsequence}
Let $A$ be an invertible reduced kneading automaton over $X$, where $|X| = 2$. 
If $A$ is  planar and does not have bad isotropy 
groups, then $G(A)$ is the iterated monodromy group 
of a post-critically finite quadratic polynomial $p: \mathbb{C} \rightarrow \mathbb{C}$, and the kneading sequence of $A$ 
is also the kneading sequence of $p$ (up to relabeling of $X$).
\end{theorem}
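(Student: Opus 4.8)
The plan is to recognize Theorem~\ref{thm:kneadingsequence} as a dictionary result: essentially all of the mathematical content is contained in Nekrashevych's work \cite{N}, and the task is to verify that our two-letter reformulation of his definitions picks out the same class of groups and assigns them the same kneading data. Accordingly I would organize the argument into a \emph{group-theoretic} half, establishing that $G(A)$ is realized as the iterated monodromy group of some post-critically finite quadratic polynomial $p$, and a \emph{combinatorial} half, checking that the two kneading sequences coincide up to relabeling of $X$.

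For the group-theoretic half, first I would invoke the construction of \cite{N} associating to each post-critically finite quadratic $p$ a standard generating automaton for $\mathrm{IMG}(p)$. Since $\deg p = 2$, this automaton acts on the binary tree $X^{\ast}$ with $|X| = 2$ and satisfies the three conditions of Definition~\ref{def:kneadingautomaton}, so it is a kneading automaton; moreover the orientation-preserving action of $p$ on $\mathbb{C}$ forces the generators to respect the cyclic ordering of preimages at each vertex, which is exactly the planarity requirement of Definition~\ref{def:planar}. The crucial input is Nekrashevych's converse: a planar kneading automaton arises from an honest quadratic polynomial precisely when it avoids the obstruction recorded (\cite{N}, pg.~184) as bad isotropy groups, i.e.\ when its kneading sequence is not of the pre-periodic form of Definition~\ref{def:preperiodic} with periodic part $v$ a proper power. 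Quoting this realizability criterion produces the desired polynomial $p$ with $\mathrm{IMG}(p) = G(A)$.

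For the combinatorial half, I would unwind both definitions of kneading sequence side by side. The classical kneading sequence of $p$ records the itinerary of the critical value under iteration, relative to the partition of the dynamical plane by the two preimages of the critical point, with a distinguished symbol marking the position at which the critical identification occurs. On the automaton side, Definition~\ref{def:kneadingsequence} reads labels by tracing the arrows of $\Gamma_{A}$ backward from the active state. The active state corresponds to the critical point and the chain of non-identity states to the post-critical orbit, so this backward trace traverses the post-critical orbit in reverse while recording precisely the itinerary; the four labels $(0,0),(1,1),(0,1),(1,0)$ encode partition membership together with whether the active (swapping) transition is taken, the latter accounting for the $\ast$ in the classical sequence. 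Comparing the two readings then yields agreement after at most a swap $0 \leftrightarrow 1$, which is forced because a polynomial carries no canonical labeling of its two critical preimages; this is the source of the ``up to relabeling'' clause.

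The main obstacle is precisely this last piece of bookkeeping, together with confirming that the two hypotheses we impose are equivalent to, and not merely implied by, Nekrashevych's realizability conditions. Matching the ``trace backward through the Moore diagram'' recipe to the classical itinerary requires a careful and fixed identification of each non-identity state with a specific post-critical point, and a consistent orientation convention feeding into the planarity condition; a sign error here would produce a spurious relabeling or reversal. I expect the conceptual content to be light once these conventions are pinned down, since the equivalence of the two kneading-sequence definitions is ultimately a restatement of how \cite{N} extracts the kneading data from the standard automaton; the care lies in making the correspondence explicit in the restricted two-letter setting rather than in the general framework of \cite{N}.
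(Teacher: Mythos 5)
Your proposal is correct and takes essentially the same route as the paper: the paper's entire proof is a two-line citation of Nekrashevych's Theorem 6.9.6 together with the discussion on page 187 of \cite{N}, which is precisely the realizability criterion (planarity plus absence of bad isotropy groups) that your argument quotes. The definitional bookkeeping you outline for matching the two kneading sequences up to relabeling is exactly the content the paper delegates wholesale to \cite{N} rather than spelling out.
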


\begin{proof}
This follows from Nekrashevych's Theorem 6.9.6 \cite{N} and the discussion on page 187 of \cite{N}.
\end{proof}
 
\section{A Sufficient Condition for Subexponential Growth} \label{section:buxperez}

Let $A$ be a kneading automaton with pre-periodic kneading sequence.
It frequently happens that an element $g \in G(A)_{1} = G_{1}$ becomes shorter in total length
when it is written as an ordered pair, i.e., as $g = (g_{\mid 0}, g_{\mid 1})$. 
The arguments of Bux and P\'{e}rez \cite{BP} show that if some positive proportion of group elements $g$ possess this
property, then $G$ will have subexponential growth.

The goal of this section is to give a careful statement and proof of this fact. Our approach is based on \cite{BP}.

\subsection{Length Functions, rewriting rules and weak reduced forms} \label{subsection:length}

We assume, from now on, that $A$ is a kneading automaton with a pre-periodic kneading sequence, and $X = \{ 0, 1 \}$. We let
$t$ denote the (unique) active state of $A$. Our assumptions imply that $t : X^{\ast} \rightarrow X^{\ast}$
is defined by the rule $t(0w) = 1w$; $t(1w) = 0w$. 

We let $S$ be the set of non-trivial, non-active states of $A$. We set $H = \langle S \rangle \leq G(A)$. We will frequently 
write $G$ in place of $G(A)$ when the automaton $A$ is understood.

It is straightforward to check that every generator $a \in A - \{ id \}$ has order $2$ under the current assumptions. We will
assume this fact in what follows. We can therefore represent each group element in $G$ (or $H$) by a word in $(A-\{ id\})^{\ast}$.  

\begin{definition}
Let $\widehat{\ell}: A - \{ id \} \rightarrow \mathbb{R}^{+}$ ( where $\mathbb{R}^{+}$ is the set of positive real numbers).
The assignment $\widehat{\ell}$ determines two functions from $(A-\{ id \})^{\ast}$ to $\mathbb{R}^{+} \cup \{ 0 \}$. For
$w = a_{1}\ldots a_{n}$, ($a_{i} \in A- \{ id \}$),
\begin{enumerate}
\item we set $|w| = \sum_{i=1}^{n} \widehat{\ell}(a_{i})$.
\item we set $\ell(w) = \mathrm{min} \{ |v| \mid v=w \text{ in } G(A) \}$.
\end{enumerate}
\end{definition}

We write $C$ in place of $\ell(t)$. 

\begin{definition} 
Let $s \in S$. We say that $(s_{\mid 0}, s_{\mid 1})$ is the \emph{first-line string production}
of $s$. More generally, if $w \in \left( A - \{ id \} \right)^{\ast}$, we replace each letter $\widehat{s} \in S$ of $w$ 
with the pair $( \widehat{s}_{\mid 0}, \widehat{s}_{\mid 1})$,
and then collect all powers of the active state $t$ at the end of the word using the relation $t(a,b) = (b,a)t$. Finally, we 
multiply all of the pairs together coordinate-wise, without any cancellation or application of relations from $G(A)$ (i.e., 
the multiplication takes place in $(A - \{ id \}^{\ast})$). The resulting expression has the form 
$(\widetilde{w}_{1}, \widetilde{w}_{2})(t)$ for
some words 
$\widetilde{w}_{1}, \widetilde{w}_{2} \in ( A - \{ id \})^{\ast}$; 
it is called the \emph{first-line string production of $w$}. (Note that, here and in what follows, the $t$ in 
$(\widetilde{w}_{1}, \widetilde{w}_{2})(t)$ appears between parentheses because it may be present or not.)

If $w \in (A - \{ id \})^{\ast}$, then we let $(\widetilde{w}_{1}, \widetilde{w}_{2})(t)$ denote the first-line
string production.
\end{definition}

\begin{example}
We consider the automaton $A$ from Example \ref{example:moore}, and find the first-line string production of $w = tabtbabt$.
We find
\begin{align*}
tabtbabt &= t(1,t)(b,a)t(b,a)(1,t)(b,a)t \\
&= (tabb, bata)t, 
\end{align*}
which is the first-line string production of $w$.
\end{example}

\begin{definition} \label{def:T}
For each element $h$ in the group $H = \langle S \rangle$ we fix a word $w_{h} \in S^{\ast}$ such that
\begin{enumerate}
\item $w_{h} = h$ in $G$;
\item If $\widehat{w} \in S^{\ast}$ is equal to $h$ in $G$, then
$|w_{h}| \leq |\widehat{w}|$.
\end{enumerate}
(In other words, $w_{h}$ is a representative for $h \in G$ that uses only letters from $S$, and such
that $|\cdot| : S^{\ast} \rightarrow \mathbb{R}^{+} \cup \{ 0 \}$ is at a minimum over all such representatives.)

We let $T$ denote the set of choices for $w_{h}$, where $h$ ranges over all of $H$.

Let $w = \left( t^{\alpha_{0}} \right) w_{1} t^{\alpha_{1}} w_{2} t^{\alpha_{2}} 
\ldots t^{\alpha_{m-1}} w_{m} \left( t^{\alpha_{m}} \right)$
be an arbitrary word in $(A - \{ id \})^{\ast}$, where each $w_{i} \in S^{\ast}$ and each $\alpha_{i} \geq 1$. 
(The parenthesized terms may be present or not.)
We let $r(w)$ be the result of replacing each subword $w_{i}$ with its representative in $T$. The latter assignment determines a function
$r: (A- \{ id \})^{\ast} \rightarrow (A- \{ id \})^{\ast}$.

A word $w \in (A- \{ id \})^{\ast}$ is in \emph{weak reduced form} if it is in the range of $r$.
\end{definition}

\begin{note} \label{note:nocancellation}
In other words, we put an arbitrary string $w \in (A- \{ id \})^{\ast}$ into weak reduced form by replacing each maximal
string from $S^{\ast}$ by its representative from $T$, without cancelling any $t$s. Thus, it is perfectly acceptable for
a substring of the form $t^{n}$ ($n \geq 2$) to appear in $r(w)$.
\end{note}

\begin{note} \label{note:idempotent}
It is easy to check that $r: (A - \{ id \})^{\ast} \rightarrow (A - \{ id \})^{\ast}$ has 
the property $r(r(u) r(v)) = r(uv)$, where the multiplication takes place in the free semigroup $(A - \{ id \})^{\ast}$.
\end{note}





\begin{definition}
Let $w \in \left( A - \{ id \} \right)^{\ast}$. Let $( \widetilde{w}_{0}, \widetilde{w}_{1})(t)$ be the first-line string production of $w$.
The \emph{first-line production} of $w$ is $(r(\widetilde{w}_{0}), r(\widetilde{w}_{1}))(t)$.

We write $(w_{0}, w_{1})(t)$ for the first-line production of $w$.
\end{definition}

Suppose that $w = (t^{\alpha_{0}})w_{1}t^{\alpha_{1}} w_{2}t^{\alpha_{2}} \ldots w_{m}(t^{\alpha_{m}})$ is in weak reduced form. We set
$$ |w|_{\ast} = \left( C \right) + (m-1)C + \left( C \right)
+ \sum_{i=1}^{m} |w_{m}|,$$
where the parenthesized terms will be absent if the corresponding factors $t^{\alpha_{0}}$, $t^{\alpha_{m}}$
are absent in $w$.

\begin{lemma}
For all $w \in \left( A - \{ id \} \right)^{\ast}$ in weak reduced form, we have $\ell(w) \leq |w|_{\ast} \leq |w|$.
\end{lemma}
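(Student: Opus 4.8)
The plan is to prove the two inequalities $\ell(w) \le |w|_{\ast}$ and $|w|_{\ast} \le |w|$ separately, and in both cases the only inputs required are that $t$ has order $2$ in $G(A)$ (a standing assumption, since every generator has order $2$), that $C = \ell(t) \le \widehat{\ell}(t)$, and that $|\cdot|$ is additive over concatenation in $(A-\{id\})^{\ast}$. Throughout I write $w$ in its block decomposition $w = (t^{\alpha_0}) w_1 t^{\alpha_1} w_2 \cdots w_m (t^{\alpha_m})$ with each $w_i \in S^{\ast}$ and each $\alpha_i \ge 1$, and I let $N$ denote the number of maximal powers of $t$ actually present, so that $N = (m-1) + \epsilon_0 + \epsilon_m$, where $\epsilon_0, \epsilon_m \in \{0,1\}$ record whether the leading block $t^{\alpha_0}$ and the trailing block $t^{\alpha_m}$ occur. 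With this notation the definition of $|w|_{\ast}$ collapses to $|w|_{\ast} = NC + \sum_{i=1}^{m} |w_i|$.

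For the right-hand inequality $|w|_{\ast} \le |w|$ I would compare the two expressions block by block. Since $|\cdot|$ merely adds the values of $\widehat{\ell}$ over the letters, one has $|w| = \sum_{i=1}^{m} |w_i| + \sum_{j} \alpha_j \widehat{\ell}(t)$, where $j$ ranges over the $N$ present powers of $t$. Because the one-letter word $t$ represents $t$ in $G(A)$, it competes in the minimum defining $C = \ell(t)$, so $C \le \widehat{\ell}(t)$; and since $\alpha_j \ge 1$ and $\widehat{\ell}(t) > 0$, we get $C \le \widehat{\ell}(t) \le \alpha_j \widehat{\ell}(t)$ for each block. Summing over the $N$ blocks gives $NC \le \sum_{j} \alpha_j \widehat{\ell}(t)$, and the $S^{\ast}$-contributions $\sum_{i=1}^m |w_i|$ agree on both sides, so $|w|_{\ast} \le |w|$ follows at once.

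For the left-hand inequality $\ell(w) \le |w|_{\ast}$ the idea is to exhibit a single word $v \in (A-\{id\})^{\ast}$ with $v = w$ in $G(A)$ and $|v| \le |w|_{\ast}$; the inequality then follows since $\ell(w)$ is a minimum over all such $v$. I would construct $v$ from the block decomposition by leaving each $w_i$ untouched, deleting every $t$-block whose exponent is even (each such block equals $id$ in $G(A)$, as $t^2 = id$), and replacing every $t$-block whose exponent is odd (each equals $t$ in $G(A)$) by a fixed minimal-length representative $v_t$ of $t$ with $|v_t| = C = \ell(t)$. Then $v = w$ in $G(A)$, and writing $k$ for the number of odd-exponent blocks, additivity of $|\cdot|$ gives $|v| = \sum_{i=1}^{m} |w_i| + kC$. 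Since $k \le N$, we obtain $|v| \le NC + \sum_{i=1}^{m} |w_i| = |w|_{\ast}$, whence $\ell(w) \le |v| \le |w|_{\ast}$.

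There is no serious obstacle; this is essentially a bookkeeping lemma. The only points that need care are the two order-$2$ reductions used to build $v$ (which rely on $t^2 = id$, recorded among the standing assumptions) and the bound $k \le N$, which is what allows us to pay $C$ for each \emph{surviving} $t$ rather than for each letter of each $t^{\alpha_j}$. I would also remark that neither argument uses the minimality of the representatives $w_i$, so both inequalities in fact hold for any word in the block form above; the hypothesis that $w$ be in weak reduced form is needed only to guarantee that the quantity $|w|_{\ast}$ is defined in the first place.
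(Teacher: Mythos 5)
Your proof is correct and follows essentially the same route as the paper's: it bounds $|w|_{\ast} \leq |w|$ block-by-block using $\alpha_{j} \geq 1$, and then reduces the $t$-exponents modulo $2$ (via $t^{2}=1$) to exhibit a representative of $|\cdot|$-length at most $|w|_{\ast}$, exactly as the paper does with its $\beta_{i} \in \{0,1\}$. Your only deviation is the extra care in distinguishing $C = \ell(t)$ from $\widehat{\ell}(t)$ (replacing each surviving $t$-block by a minimal representative $v_{t}$ with $|v_{t}| = C$); the paper tacitly treats these as equal, which is in fact automatic in this setting, since any word equal to $t$ in $G(A)$ must act nontrivially on the first level of the tree and hence contain at least one occurrence of the letter $t$, so that $\ell(t) = \widehat{\ell}(t)$.
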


\begin{proof}
Let $w = (w_{0})t^{\alpha_{0}}w_{1}t^{\alpha_{1}} \ldots t^{\alpha_{m-1}} w_{m} t^{\alpha_{m}}(w_{m+1})$ be a word
in weak reduced form. (Thus, $w_{i} \in T$, for all $i$.) We assume, for the sake of simplicity, that $w_{0} = w_{m+1} = 1$.
\begin{align*}
|w| &= \sum_{i=0}^{m} \alpha_{i}C + \sum_{i=1}^{m} |w_{i}| \\
&\geq \sum_{i=0}^{m} C + \sum_{i=1}^{m} |w_{i}| \\
&= |w|_{\ast}.
\end{align*}
For each $i \in \{ 0, \ldots, m \}$, let $\beta_{i} \in \{ 0, 1 \}$ be the result of reducing $\alpha_{i}$ modulo $2$.
It follows that 
$t^{\alpha_{0}}w_{1}t^{\alpha_{1}} \ldots t^{\alpha_{m-1}} w_{m} t^{\alpha_{m}} = t^{\beta_{0}}w_{1}t^{\beta_{1}}\ldots t^{\beta_{m-1}}w_{m}t^{\beta_{m}}$
in the group $G(A)$. We have
\begin{align*}
|w|_{\ast} &\geq \sum_{i=0}^{m} \beta_{i}C + \sum_{i=1}^{m} |w_{i}| \\
&= |t^{\beta_{0}} w_{1} t^{\beta_{1}} \ldots t^{\beta_{m-1}} w_{m} t^{\beta_{m}}| \\
&\geq \ell(w),
\end{align*}
where the final inequality follows from the definition of $\ell$.
\end{proof}

\begin{definition} \label{def:admissible}
The length function $\ell: \left( A - \{ id \} \right)^{\ast} \rightarrow \mathbb{R}^{+} \cup \{ 0 \}$ is
called \emph{admissible} if
$|t| + |w| \geq |w_{0}| + |w_{1}|$, for all $w \in T$, where $(w_{0}, w_{1})(t)$ is the first-line production of $w$.
\end{definition}
 
\begin{lemma} \label{lemma:nextline}
Let $w \in \left( A - \{ id \} \right)^{\ast}$ be in weak reduced form, and let $(w_{0}, w_{1})(t)$ be the first-line production of $w$.
\begin{enumerate}
\item $|w|_{\ast} + C \geq |w_{0}| + |w_{1}|$
if $w$ contains fewer blocks of $t$s than blocks of letters from $S$;
\item $|w|_{\ast} \geq |w_{0}| + |w_{1}|$ otherwise.
\end{enumerate}
\end{lemma}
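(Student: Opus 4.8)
The plan is to relate the two sides of each inequality by analyzing how the first-line production is built and then invoking admissibility (Definition \ref{def:admissible}) block-by-block on the $S^{\ast}$-factors. Write $w = (t^{\alpha_{0}}) w_{1} t^{\alpha_{1}} w_{2} \ldots t^{\alpha_{m-1}} w_{m} (t^{\alpha_{m}})$ in weak reduced form, so each $w_{i} \in T$. First I would pass to the first-line string production and observe that collecting all copies of $t$ to the end (via $t(a,b) = (b,a)t$) does not change the total $\widehat{\ell}$-length of the two coordinate words: moving a $t$ past a pair only swaps the two coordinates, so the combined first-line \emph{string} production $(\widetilde{w}_0, \widetilde{w}_1)(t)$ satisfies $|\widetilde{w}_0| + |\widetilde{w}_1| = \sum_{i=1}^{m}\bigl(|w_{i,0}| + |w_{i,1}|\bigr)$, where $(w_{i,0}, w_{i,1})(t)$ is the first-line string production of the single block $w_{i}$. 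Since passing from $\widetilde{w}_j$ to $w_j = r(\widetilde{w}_j)$ can only decrease length (as $r$ replaces each $S^\ast$-substring by a $T$-representative of minimal $|\cdot|$), we get the crucial bound $|w_0| + |w_1| \le \sum_{i=1}^{m}\bigl(|w_{i,0}| + |w_{i,1}|\bigr)$.

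Next I would apply the admissibility hypothesis to each block: since each $w_i \in T$, Definition \ref{def:admissible} gives $|w_{i,0}| + |w_{i,1}| \le |t| + |w_i| = C + |w_i|$. Summing over $i$ yields
\begin{align*}
|w_0| + |w_1| &\le \sum_{i=1}^{m}\bigl(C + |w_i|\bigr) \\
&= mC + \sum_{i=1}^{m}|w_i|.
\end{align*}
The proof then reduces to comparing $mC + \sum_{i=1}^{m}|w_i|$ against $|w|_\ast = (\text{boundary terms}) + (m-1)C + \sum_{i=1}^{m}|w_i|$, which is purely a bookkeeping count of how many $C$'s each side carries. The number of $C$'s in $|w|_\ast$ is exactly the number of blocks of $t$s in $w$ (counting the boundary blocks $t^{\alpha_0}, t^{\alpha_m}$ when present, each contributing one $C$, plus the $m-1$ interior blocks). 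So I would split into the two stated cases according to whether the number of $t$-blocks is strictly less than $m$ (the number of $S$-blocks) or not.

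The main obstacle — and the only genuinely delicate point — is the careful accounting of boundary cases in case (2), and making sure the admissibility estimate is only being charged once per block. In case (1), $w$ has at most $m-1$ blocks of $t$s, so $|w|_\ast$ contains at most $(m-1)C$ worth of boundary-plus-interior $C$ terms, giving $|w|_\ast + C \ge (m-1)C + C + \sum|w_i| = mC + \sum|w_i| \ge |w_0|+|w_1|$; in case (2), there are at least $m$ blocks of $t$s (so $|w|_\ast$ carries at least $mC$), whence $|w|_\ast \ge mC + \sum|w_i| \ge |w_0|+|w_1|$ directly. I would take care to verify the length-preservation claim for the $t(a,b)=(b,a)t$ rewriting explicitly, since that is where the structure of kneading automata (every non-active generator has order $2$, and $t$ acts as the transposition $t(0w)=1w$, $t(1w)=0w$) silently enters, and to confirm that the reduction map $r$ genuinely does not increase $|\cdot|$ before summing the per-block admissibility inequalities.
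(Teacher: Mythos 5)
Your overall architecture (block decomposition, per-block use of admissibility, counting one $C$ per block of $t$s) matches the paper's, and your $C$-bookkeeping in the two cases is correct --- your direct count in case (1) is even a touch cleaner than the paper's device of passing to $wt$ and invoking case (2). But there is a genuine gap at the central step: you apply Definition \ref{def:admissible} to the first-line \emph{string} production of each block, writing $|w_{i,0}|+|w_{i,1}| \le |t|+|w_{i}|$, whereas admissibility is a statement about the first-line \emph{production}, i.e., about $r(w_{i,0})$ and $r(w_{i,1})$. These are not interchangeable, and the inequality you invoke is false in general. Concretely, in the $11(0)^{\omega}$ automaton of Section \ref{subsection:second}, take $w_{i} = abab \in T$ with $\widehat{\ell} \equiv 1$: the first-line string production is $(bb, tata)$, and $|bb| + |tata| = 6 > 5 = |t| + |abab|$, while the first-line production is $(1, tata)$ (since $r(bb) = 1$), and only $0 + 4 \le 5$ holds. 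Since your chain of inequalities passes through $\sum_{i} \left( |w_{i,0}| + |w_{i,1}| \right)$, which can exceed $mC + \sum_{i} |w_{i}|$, the proof as written breaks down even though the two endpoints of your chain are in fact correctly related. The point you are missing is that the cancellation captured by $r$ inside each block's production is precisely the resource that admissibility is permitted to exploit; discarding it before applying Definition \ref{def:admissible} gives away too much.

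The repair is to insert the reduction $r$ at the block level \emph{before} summing, which is exactly what the paper's proof does with its partition observation. By Note \ref{note:idempotent}, $r(r(u)r(v)) = r(uv)$, so each coordinate $w_{k}$ of the first-line production of $w$ equals $r$ applied to a product of the blockwise reduced halves $(w_{i})_{j} = r(w_{i,j})$, distributed between the two coordinates according to the parity of the intervening $t$s; since $r$ never increases $|\cdot|$, this yields $|w_{0}| + |w_{1}| \le \sum_{i} \left( |(w_{i})_{0}| + |(w_{i})_{1}| \right)$, to which admissibility now applies legitimately term by term. With that substitution your bookkeeping goes through verbatim, and the argument becomes essentially the paper's (the paper proves case (2) for words with at least as many $t$-blocks as $S$-blocks and deduces case (1) by considering $wt$, whereas you count uniformly --- an immaterial difference).
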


\begin{proof}
We prove (2) first. Let us assume that $w = t^{\alpha_{1}} \widehat{w}_{1} t^{\alpha_{2}} \ldots t^{\alpha_{m}} \widehat{w}_{m}$
is in weak reduced form. We will confine our attention to this subcase, since the subcase
$w = \widehat{w}_{1} t^{\alpha_{1}} \ldots t^{\alpha_{m-1}} \widehat{w}_{m} t^{\alpha_{m}}$
is essentially similar, and the subcase $w = t^{\alpha_{0}} \widehat{w}_{1} t^{\alpha_{1}} \ldots t^{\alpha_{m-1}} \widehat{w}_{m} t^{\alpha_{m}}$
is easier.

We have
\begin{align*}
|w|_{\ast} &= \sum_{i=1}^{m} (|t| + |\widehat{w}_{i}|) \\
&\geq \sum_{i=1}^{m} \left( |(\widehat{w}_{i})_{0}| + |(\widehat{w}_{i})_{1}| \right).
\end{align*}
The desired conclusion $|w|_{\ast} \geq |w_{0}| + |w_{1}|$ now follows from the observation that there is a partition
$\{ P_{0}, P_{1} \}$ of $\{ (\widehat{w}_{i})_{j} \mid  i = 1, \ldots, m; j = 0, 1 \}$ such that each of the
words $w_{k}$ ($k=0,1$) in the first-line production $(w_{0}, w_{1})(t)$ of $w$ is obtained by multiplying the
elements of $P_{k}$ in some order (with each word in $P_{k}$ appearing exactly once), and then applying $r$.   

To prove (1), we note that $w$ must have the form 
$w = \widehat{w}_{1} t^{\alpha_{1}} \ldots t^{\alpha_{m-1}} \widehat{w}_{m} t^{\alpha_{m}} \widehat{w}_{m+1}$. We consider the word $wt$.
By case (2), 
$$ |wt|_{\ast} \geq |(wt)_{0}| + |(wt)_{1}|,$$
but $|wt|_{\ast} = |w|_{\ast} + C$, and $(wt)_{i} = w_{i}$ for $i=0,1$. 
\end{proof}

\subsection{Good and $\epsilon$-good} \label{subsection:good}

\begin{definition}
Let $w \in \left( A - \{ id \} \right)^{\ast}$ 
be in weak reduced form. A subword of $w$ is called \emph{protected} if it begins and ends with $t$, and contains at least one letter 
from $S$.
\end{definition}

\begin{definition}
Write $\widehat{w} \preccurlyeq w$ if 
there is some protected subword $\widetilde{w}$ of $w$ such that
$\widehat{w}$ is a protected subword of some word in the first-line production of $\widetilde{w}$.
\end{definition}

\begin{lemma} \label{lemma:occurrences}
Let $w$ be a word representing an element of $G_{1}$, and let $u_{1}, \ldots, u_{n}$ be protected subwords meeting (at most) in an initial or
terminal block of $t$s. Let $\widehat{u}_{i} \preccurlyeq u_{i}$ for $i = 1, \ldots, n$. The first-line production of $w$ contains occurrences
of each of the $\widehat{u}_{i}$. These occurrences meet (at most) in an initial or terminal block of $t$s and there are at least $n$ distinct 
occurrences in all (one of each).
\end{lemma}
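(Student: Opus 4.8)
The plan is to track how the $S$-letters of $w$ propagate into the first-line production, exploiting two features of the construction: the first-line string production preserves the left-to-right order of the $S$-letters, and collecting the active state at the end via the relation $t(a,b) = (b,a)t$ only transposes coordinates. Write $w$ as an alternation of maximal blocks of $t$s and maximal blocks of letters from $S$. Since the $u_{i}$ meet at most in blocks of $t$s, their $S$-letters are pairwise disjoint and occur in $w$ in a definite left-to-right order; choosing, for each $i$, a protected subword $\widetilde{u}_{i} \preccurlyeq$-witnessing $\widehat{u}_{i} \preccurlyeq u_{i}$ inside $u_{i}$ preserves this disjointness and order.

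First I would record the parity bookkeeping. By hypothesis $\widehat{u}_{i}$ is a protected subword of one coordinate of the first-line production $(v_{0}^{(i)}, v_{1}^{(i)})(t)$ of $\widetilde{u}_{i}$. Computing the first-line string production of all of $w$, each $S$-letter $s$ contributes the pair $(s_{\mid 0}, s_{\mid 1})$, and when the $t$s are collected on the right this pair is transposed once for each $t$ lying to its left. Consequently the contribution of $\widetilde{u}_{i}$ to the unreduced first-line string production $(\widetilde{w}_{0}, \widetilde{w}_{1})$ of $w$ is a contiguous chunk agreeing with $(v_{0}^{(i)}, v_{1}^{(i)})$, possibly with the two coordinates interchanged according to the parity of the number of $t$s preceding $\widetilde{u}_{i}$ in $w$. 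The hypothesis $w \in G_{1}$ ensures the trailing $t$ is absent, so that $(w_{0}, w_{1})$ is a genuine pair.

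Next comes the essential point, the \emph{locality of weak reduction}. The word $\widehat{u}_{i}$ is protected, hence bounded on both sides by $t$s, and $r$ neither moves nor cancels any $t$ (see Note \ref{note:nocancellation}); it only rewrites maximal strings from $S^{\ast}$. The maximal $S$-blocks interior to $\widehat{u}_{i}$ are delimited by $t$s that persist in $\widetilde{w}_{0}$ (resp. $\widetilde{w}_{1}$), so these blocks are maximal in the global word as well, and $r$ reduces them identically whether it is applied to the isolated chunk or to all of $\widetilde{w}_{0}$. Therefore the protected occurrence of $\widehat{u}_{i}$ inside $r(v_{0}^{(i)})$ or $r(v_{1}^{(i)})$ survives verbatim as a subword of $w_{0} = r(\widetilde{w}_{0})$ or $w_{1} = r(\widetilde{w}_{1})$, which shows that the first-line production of $w$ contains each $\widehat{u}_{i}$.

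Finally, disjointness and the count follow from order preservation: the chunks of $\widetilde{w}_{0}$ and $\widetilde{w}_{1}$ produced by distinct $\widetilde{u}_{i}$ are pairwise disjoint and appear in the order inherited from $w$, so the occurrences of the $\widehat{u}_{i}$ lie in disjoint regions of the two coordinates, and since each $\widehat{u}_{i}$ begins and ends with $t$, two such occurrences can share at most a boundary block of $t$s. This yields $n$ distinct occurrences, one per $i$, meeting at most in an initial or terminal block of $t$s. I expect the main obstacle to be the locality argument for $r$: one must verify carefully that a maximal $S$-block inside a protected subword is neither enlarged nor merged with a neighboring block when reduction is carried out globally rather than on the isolated chunk, and it is precisely the bounding $t$s guaranteed by protectedness that rule this out.
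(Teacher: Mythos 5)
Your proposal is correct and takes essentially the same route as the paper's proof: you decompose $w$ into the chunks contributed by the witnesses $\widetilde{u}_{i}$ (the paper trims the $u_{i}$ to $\mathring{u}_{i}$ and interleaves them with separators $v_{i}$), observe that each chunk lands contiguously in one coordinate of the first-line string production up to the parity-determined coordinate swap, and then use protectedness together with the fact that $r$ cancels no $t$s to localize the weak reduction so that each $\widehat{u}_{i}$ survives, with distinct occurrences meeting at most in boundary blocks of $t$s. The locality step you flag as the main obstacle is precisely the one the paper handles by choosing a protected preimage $u$ with $(\widetilde{u}_{k})_{j_{k}} = xuy$, $r(u) = \widehat{u}_{k}$, and invoking $r((\widetilde{u}_{k})_{j_{k}}) = r(x)r(u)r(y)$.
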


\begin{proof}
Consider the words $\mathring{u}_{1}, \ldots, \mathring{u}_{n}$ obtained from $u_{1}, \ldots, u_{n}$ (respectively) by
omitting the initial and terminal blocks of $t$s. We note that $\mathring{u}_{i}$ and $u_{i}$  have the same
first-line string productions (although they might produce the words in opposite coordinates). We write
$w = v_{1}\mathring{u}_{1} v_{2} \mathring{u}_{2} \ldots \mathring{u}_{n}v_{n+1}$, where:  (i) the $v_{i}$ (for $1 \leq i \leq n$) end with 
blocks of $t$s (and may consist entirely of $t$s), and (ii) the $v_{i}$ (for $2 \leq i \leq n+1$) begin with blocks of $t$s (and may consist
entirely of $t$s), but are not trivial strings in either case. We form the first-line string production 
of $w$. Note that this is done word-by-word in the product 
$w = v_{1}\mathring{u}_{1} v_{2} \mathring{u}_{2} \ldots \mathring{u}_{n}v_{n+1}$. Each word $v_{i}$, $\mathring{u}_{i}$
contributes a string to both the left- and right-hand coordinates (although each may contribute the right-half 
of its first-line string production to the left word in the first-line string production of $w$, or vice versa).

Assume that 
$$ (\widetilde{v}_{1})_{i_{1}} (\widetilde{u}_{1})_{j_{1}} (\widetilde{v}_{2})_{i_{2}} \ldots (\widetilde{u}_{n})_{j_{n}}(\widetilde{v}_{n+1})_{i_{n+1}}$$
is the left-half of the first-line string production of $v_{1}\mathring{u}_{1} \ldots \mathring{u}_{m}v_{m+1}$, where each
string $(\widetilde{v}_{k})_{i_{k}}$, $(\widetilde{u}_{k})_{j_{k}}$ above is either the left or right half of the
first-line string production of $v_{k}$, $\mathring{u}_{k}$ (respectively).
Choose a word $(\widetilde{u}_{k})_{j_{k}}$ ($k \in \{ 1, \ldots, m \}$).
We assume without loss of generality that $\widehat{u}_{k}$ occurs as a protected subword in $(\mathring{u}_{k})_{j_{k}}$,
where $(\mathring{u}_{k})_{j_{k}}$ denotes either the left- or right-half of the first-line production of $\mathring{u}_{k}$.
Since $\widehat{u}_{k} \preccurlyeq \mathring{u}_{k}$, there is some protected subword $u$ such that
$(\widetilde{u}_{k})_{j_{k}} = xuy$ and $r(u) = \widehat{u}_{k}$. 
Thus $r((\widetilde{u}_{k})_{j_{k}}) = r(x)r(u)r(y)$. Since no $t$s are cancelled in an application of $r$, we get that
$r(u) = \widehat{u}_{k}$ occurs as a subword in $w_0$. That is, $\widehat{u}_{k}$ occurs as a protected subword in $w_{0}$. The lemma
now follows.
\end{proof}

\begin{definition} \label{def:reducing} Let $w$ be a word in weak reduced form. A subword $v$ of $w$ is called \emph{reducing} if
$v \equiv t w_{1} t w_{2} t w_{3} \ldots t w_{m}$, where each $w_{i} \in T$, the word $w_{m}$ is followed 
immediately by a $t$ in $w$, $m \geq 1$, and $\ell(v_{0}) + \ell(v_{1}) < |v|_{\ast}$.
\end{definition}

\begin{definition} \label{def:good}
A word $u$ is \emph{good at depth $m$} if there are $u_{1}, \ldots, u_{m}$ such that
$$ u \succcurlyeq u_{1} \succcurlyeq \ldots \succcurlyeq u_{m}, $$
where $u_{m}$ contains a reducing subword $v$.

We set 
$$ C_{u} = \frac{ \ell(v_{0}) + \ell(v_{1})}{|v|_{\ast}}$$
and
$$ \sigma_{u} = |v|_{\ast}.$$
\end{definition}

\begin{definition}
A word $w \in (A - \{ id \})^{\ast}$ is \emph{reduced} if $|w| = \ell(w)$.
\end{definition}

\begin{definition} \label{def:epsilongood1}
Let $0 < \epsilon < 1$. A reduced word $w$ of length $N$ (i.e., $\ell(w) = N$) is \emph{$\epsilon$-good with respect to the good word $u$}
if at least $\epsilon N$ of its length is taken up by occurrences of $u$ which meet in an initial or terminal $t$.
\end{definition}

\begin{definition} 
If $w \in (A - \{ id \})^{\ast}$ represents an element of $G_{k}$ and $\alpha \in \{ 0, 1 \}^{k}$, then we let
$w_{\alpha}$ denote the production of $w$ in position $\alpha$. For instance, $w_{0}$ denotes the left word in the
first-line production of $w$. (Here we consider the ``$0$" branch the left, and the ``$1$" branch the right, half of the
binary tree $X^{\ast}$.) If $k \geq 2$, then $w_{01}$ would denote the right word in the first-line production of $w_{0}$.
We also say that $w_{01}$ is in the \emph{second-line production} of $w$, $w_{011}$ is in the \emph{third-line production}, and so forth.
\end{definition}

\begin{proposition} \label{proposition:bigun}
Let $u$ be a good-at-depth-$m$ word ($m \geq 0$).
There are $0 < \theta < 1$ and $K > 0$ such that, for any $w \in (A- \{ id \})^{\ast}$
satisfying
\begin{enumerate}
\item $|w| = \ell(w)$,
\item $\pi(w) \in G_{m+1}$, and 
\item $w$ is $\epsilon$-good with respect to $u$,
\end{enumerate}
$$ \sum_{\delta} \ell( w_{\delta}) \leq \theta \ell(w) + K.$$
The sum on the left is over all strings $\delta \in \{ 0, 1 \}^{m+1}$.
\end{proposition}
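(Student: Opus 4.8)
The plan is to first establish the weak bound with $\theta=1$ and bounded additive error, and then to upgrade $\theta$ to something strictly less than $1$ by cashing in the reducing subwords that $\epsilon$-goodness supplies in quantity proportional to $N:=\ell(w)$. Since $w$ is reduced, I would begin by replacing $w$ by its weak reduced form (harmless, as $r$ never increases $|\cdot|$ and $|w|=\ell(w)$), so that $|w|=N$. Hypothesis (2), $\pi(w)\in G_{m+1}$, is exactly what guarantees that every section $w_\delta$ with $|\delta|\le m$ lies in $G_1$, so that its first-line production is a genuine coordinate pair; this is what makes the iterated production down to level $m+1$ meaningful. Iterating Lemma~\ref{lemma:nextline} together with the sandwich $\ell(\cdot)\le|\cdot|_\ast\le|\cdot|$ gives, at the step from level $k$ to level $k+1$, the bound $|w_{\delta 0}|+|w_{\delta 1}|\le|w_\delta|+C$, and summing over $\delta\in\{0,1\}^k$ telescopes to
$$\sum_{\delta\in\{0,1\}^{k}}|w_\delta|\;\le\;N+(2^{k}-1)C .$$
In particular this already yields $\sum_{\delta\in\{0,1\}^{m+1}}\ell(w_\delta)\le N+(2^{m+1}-1)C$, i.e.\ the desired inequality with the useless value $\theta=1$.

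Next I would count reducing subwords at level $m$. From $\epsilon$-goodness (Definition~\ref{def:epsilongood1}) the occurrences of $u$ in $w$ cover at least $\epsilon N$ of its length and meet only in initial or terminal blocks of $t$s, so there are at least $n:=\epsilon N/|u|$ of them. Because $u$ is good at depth $m$ (Definition~\ref{def:good}), there is a chain $u\succcurlyeq u_1\succcurlyeq\cdots\succcurlyeq u_m$. I would propagate the occurrences downward one level at a time: at the step from level $k$ to level $k+1$, apply Lemma~\ref{lemma:occurrences} to each section $w_\gamma$ ($|\gamma|=k$) with the relation $u_{k+1}\preccurlyeq u_k$, concluding inductively that the sections $\{w_\delta:|\delta|=m\}$ collectively contain at least $n$ occurrences of $u_m$, still pairwise meeting only in initial or terminal $t$-blocks. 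Each such $u_m$ contains the reducing subword $v$ as a literal subword, so level $m$ carries at least $n$ spread-out reducing subwords.

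To extract the contraction I would then reexamine the single step from level $m$ to level $m+1$, splitting the contribution of each section $w_\delta$ ($|\delta|=m$) into the parts coming from its reducing subwords and the rest. Each occurrence of $v$ contributes $|v|_\ast=\sigma_u$ to $\sum_{|\delta|=m}|w_\delta|_\ast$, but by Definition~\ref{def:reducing} and the definition of $C_u$ it produces sections of total $\ell$-length only $\ell(v_0)+\ell(v_1)=C_u\sigma_u$, a saving of $(1-C_u)\sigma_u$. Since the $v$'s meet only in $t$-blocks and $r$ never cancels a $t$ (Note~\ref{note:nocancellation}), their productions occupy disjoint portions of the level-$(m+1)$ words, so these savings accrue additively over all $\ge n$ occurrences. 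Combining with the telescoping bound through level $m$ yields
$$\sum_{\delta\in\{0,1\}^{m+1}}\ell(w_\delta)\;\le\;N+(2^{m+1}-1)C-n(1-C_u)\sigma_u\;\le\;\Bigl(1-\tfrac{\epsilon(1-C_u)\sigma_u}{|u|}\Bigr)N+(2^{m+1}-1)C .$$
Setting $K=(2^{m+1}-1)C$ and $\theta=1-\tfrac{\epsilon(1-C_u)\sigma_u}{|u|}$ finishes the argument; since $C_u<1$ we have $\theta<1$, and in the degenerate case where this quantity is non-positive one simply fixes any $\theta\in(0,1)$, the bound then being even stronger.

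The main obstacle I anticipate is the additivity of the savings in the third paragraph: I must verify that the $\ge n$ occurrences of $v$, known only to meet in initial or terminal $t$-blocks, yield $n$ genuinely \emph{independent} savings, so that replacing each $v_0$ and $v_1$ by a minimal-length representative can be carried out simultaneously across all occurrences without interference or double counting. This rests squarely on the fact that no $t$ is ever cancelled by $r$ together with the disjointness bookkeeping of Lemma~\ref{lemma:occurrences}. A secondary point requiring care is confirming that the copy of $v$ inside each produced $u_m$ is honestly a reducing subword in the sense of Definition~\ref{def:reducing} (in weak reduced form, with its $T$-blocks intact and followed by a $t$), which amounts to tracing the definitions of $\preccurlyeq$ and of weak reduced form through the production.
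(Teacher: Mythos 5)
Your proposal is correct and takes essentially the same route as the paper's own proof: the same telescoping application of Lemma~\ref{lemma:nextline}, the same downward propagation of occurrences via Lemma~\ref{lemma:occurrences}, and the same decomposition of each level-$m$ section as $x_{0}vx_{1}v\ldots vx_{\beta}$ yielding a saving of $(1-C_{u})\sigma_{u}$ per occurrence of the reducing word $v$, with at least $\epsilon\ell(w)/|u|$ occurrences in all (including the care you flag about absorbing stray $t$-blocks and non-cancellation under $r$, which the paper handles by replacing $v$ with $v'$ of equal $|\cdot|_{\ast}$). The only difference is presentational: you obtain the inequality directly with explicit $\theta = 1 - \epsilon(1-C_{u})\sigma_{u}/|u|$ and $K = (2^{m+1}-1)C$, whereas the paper packages the identical estimates as a dichotomy ending in a contradiction.
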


\begin{proof}
Since occurrences of $u$ contribute at least $\epsilon |w|$ to the total length of $w$, there must be at least
$\epsilon(|w|/|u|)$ of them. We let $\beta$ be the number of such occurrences; thus, $ \beta \geq \epsilon(|w|/|u|)$.
We let $v, u_{1}, \ldots, u_{m}$ be the words from Definition \ref{def:good}. 

We choose $\theta$ so that $0 < \theta < 1$ and 
$$ \theta + \frac{\epsilon ( 1 - C_{u}) \sigma_{u}}{|u|} > 1.$$
It is clearly possible to find such $\theta$, since the second term on the left is positive.

We first produce the $m$th line production of $w$. We have either:
\begin{enumerate}
\item $\displaystyle \sum_{\gamma} |w_{\gamma}|_{\ast} \leq \theta |w|$, for all $w$ sufficiently large, or
\item $\displaystyle \sum_{\gamma} |w_{\gamma}|_{\ast} > \theta |w|$, for some sequence of words $w$ such that $|w| \rightarrow \infty$.
\end{enumerate}
The sums on the left are taken over all strings $\gamma \in \{ 0, 1 \}^{m}$.

In the first case, we have
\begin{align*}
\theta |w| + 2^{m}C &\geq  \sum_{\gamma} ( |w_{\gamma}|_{\ast} + C ) \\
&\geq \sum_{\gamma} |w_{\gamma 0}| + |w_{\gamma 1}| \\
&\geq \sum_{\delta} \ell(w_{\delta}),
\end{align*}
where the final sum is over all strings $\delta$ of length $m+1$ in $\{ 0, 1 \}$, and $(w_{\gamma 0}, w_{\gamma 1})$ is the
first-line string production of $w_{\gamma}$.
Thus there is nothing left to prove in this case.

Now assume that we are in the second case. For each $\gamma \in \{ 0, 1 \}^{m}$, we let 
$\beta_{\gamma}$ denote the number of occurrences of $u_{m}$ in $w_{\gamma}$. By Lemma \ref{lemma:occurrences},
we have $\sum_{\gamma} \beta_{\gamma} \geq \beta$.  

The word $w_{\gamma}$ has the form
$$ w_{\gamma} = x_{0}vx_{1}vx_{2}v \ldots x_{\beta_{\gamma}-1}vx_{\beta_{\gamma}},$$
where each $x_{i}$ is a word in weak reduced form, any one of which may be trivial, with the exception of $x_{\beta_{\gamma}}$, 
which must begin with a $t$ if $\beta_{\gamma} > 0$.
We note that each occurrence of $v$ must be followed immediately by a $t$ according to Definition \ref{def:reducing}. It follows that
each $x_{i}$ (for $i>0$) begins with a $t$ if it is not the null string. If some $x_{i}$ ends with a block of $t$s, then
we can combine this block with the word $v$ following it (if any). The altered occurrence of $v$, $v'$, still contributes the same 
strings to the next line string production (though possibly in different places) and even satisfies $|v'|_{\ast} = |v|_{\ast}$, 
so we may ignore the additional powers of $t$ for the sake of the following argument. One can now easily verify that each $x_{i}$
($i>0$) contains at least as blocks of  $t$s as blocks of letters from $S$.

We first claim that 
$$ |w_{\gamma}|_{\ast} + C + (C_{u} - 1)\beta_{\gamma}\sigma_{u} \geq \ell(w_{\gamma 0}) + \ell(w_{\gamma 1}),$$
for each $\gamma \in \{ 0, 1 \}^{m}$.
From the definition of $| \cdot |_{\ast}$ we see that
$$ \ast) \quad |w_{\gamma}|_{\ast} = \sum_{i=0}^{\beta_{\gamma}} |x_{i}|_{\ast} + \sum_{i=1}^{\beta_{\gamma}} |v|_{\ast}.$$
It follows that
\begin{align*}
|w_{\gamma}|_{\ast} + C &= \sum_{i=1}^{\beta_{\gamma}} |x_{i}|_{\ast} + \left( |x_{0}|_{\ast} + C \right) + \beta_{\gamma}\sigma_{u} \\
&\geq \sum_{i=0}^{\beta_{\gamma}} \left[ \ell( (x_{i})_{0}) + \ell( (x_{i})_{1}) \right] + \beta_{\gamma}C_{u}\sigma_{u} \\
&\geq \ell(w_{\gamma 0}) + \ell(w_{\gamma 1}).
\end{align*}
Here the first equality follows directly from $\ast)$. The expression on the second line is the $|\cdot|$-length
of the strings that result from taking the first-line productions of the words $x_{i}$ and $v$, and reducing the results (i.e., substituting 
shortest-length strings) individually. 
The first inequality follows from Lemma \ref{lemma:nextline}. The final inequality is now immediate. We also have
\begin{align*}
|w_{\gamma}|_{\ast} + C + \beta_{\gamma}C_{u}\sigma_{u} - \beta_{\gamma}\sigma_{u} &= \sum_{i=0}^{\beta_{\gamma}} |x_{i}|_{\ast} + C + \beta_{\gamma}C_{u}\sigma_{u} \\
&\geq \sum_{i=0}^{\beta_{\gamma}} \left[ \ell((x_{i})_{0}) + \ell((x_{i})_{1}) \right] + \beta_{\gamma}C_{u}\sigma_{u}.
\end{align*}
The claim follows readily.

Applying the claim (and Lemma \ref{lemma:nextline}, repeatedly) we get
\begin{align*}
|w| + (1 + 2 + 2^{2} + \ldots + 2^{m})C &\geq \sum_{\gamma} \left( |w|_{\ast} + C \right) \\
&\geq \sum_{\gamma} \left( |w|_{\ast} + C \right) + \beta \sigma_{u} (C_{u} - 1) \\
&\geq \sum_{\delta} \ell( w_{\delta}).
\end{align*}
At worst,
$$\sum_{\gamma} \left( |w|_{\ast} + C \right) + \beta \sigma_{u} (C_{u} - 1) > \theta |w|$$
for infinitely many $w$ such that $|w| \rightarrow \infty$ (otherwise there is nothing left to prove).

We conclude that 
\begin{align*}
|w| + (1 + 2 + 2^{2} + \ldots + 2^{m})C &\geq \theta |w| + (1 - C_{u})\beta \sigma_{u} \\
&\geq \theta |w| + (1-C_{u}) \epsilon \frac{|w|}{|u|} \sigma_{u} \\
&= |w| \left( \theta + \frac{\sigma_{u}(1-C_{u})\epsilon}{|u|} \right).
\end{align*}
for all such $w$. It follows that
$$ |w| + (1 + 2 + \ldots + 2^{m})C \geq D|w|,$$
where $C$, $D$, and $m$ are constants, $D>1$, and $|w| \rightarrow \infty$. This is a contradiction.
\end{proof}

\begin{definition} \label{def:epsilongood2}
Let $0 < \epsilon < 1$. Let $\mathcal{U} = \{ u_{1}, \ldots, u_{n} \}$ be such that $u_{i}$
is a good-at-depth-$m_{i}$ word, possibly for varying $m_{i}$. A reduced 
word $w$ of length $N$ is \emph{$\epsilon$-good with respect to $\mathcal{U}$}
if at least $\epsilon N$ of its length is taken up by occurrences of words from $\mathcal{U}$ which meet (at most) in initial or terminal 
blocks of $t$s.
\end{definition}

\begin{theorem}\label{theorem:final}
Let $\mathcal{U} = \{ u_{1}, \ldots, u_{n} \}$ be as in Definition \ref{def:epsilongood2}. Let $M = \mathrm{max}\{ m_{i} \}$.
There are $0 < \theta < 1$ and $K > 0$ such that, for all reduced words $w \in (A - \{ id \})^{\ast}$ satisfying
\begin{enumerate}
\item $|w| = \ell(w)$,
\item $\pi(w) \in G_{M+1}$, and
\item $w$ is $\epsilon$-good with respect to $\mathcal{U}$,
\end{enumerate}
$$ \sum_{\delta} \ell(w_{\delta}) \leq \theta \ell(w) + K.$$
The sum on the left is over all strings $\delta \in \{ 0, 1 \}^{M+1}$.
\end{theorem}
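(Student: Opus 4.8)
The plan is to reduce Theorem \ref{theorem:final} to the single-word case already established in Proposition \ref{proposition:bigun}, by a pigeonhole argument followed by a descent through the intermediate production levels. First, since $w$ is $\epsilon$-good with respect to $\mathcal{U}$, at least $\epsilon N$ of its length ($N = \ell(w)$) is covered by occurrences of the words $u_{1}, \ldots, u_{n}$, and by Definition \ref{def:epsilongood2} these occurrences meet at most in initial or terminal blocks of $t$s, so their contributions to the length are disjoint apart from shared $t$-blocks and therefore add up. As there are only $n$ words in $\mathcal{U}$, the pigeonhole principle gives some single word $u_{i}$ whose occurrences account for at least $(\epsilon/n)N$ of the length of $w$. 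In other words, $w$ is $(\epsilon/n)$-good with respect to the single good-at-depth-$m_{i}$ word $u_{i}$ in the sense of Definition \ref{def:epsilongood1}.

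Next I would apply Proposition \ref{proposition:bigun} to the word $u_{i}$ with goodness parameter $\epsilon/n$. Hypothesis (2) of Theorem \ref{theorem:final} asks for $\pi(w) \in G_{M+1}$, and since $M \geq m_{i}$ we have $G_{M+1} \subseteq G_{m_{i}+1}$, so the hypotheses of Proposition \ref{proposition:bigun} are met. This yields constants $0 < \theta_{i} < 1$ and $K_{i} > 0$, depending only on $u_{i}$ and $\epsilon/n$, such that $\sum_{\delta'} \ell(w_{\delta'}) \leq \theta_{i}\, \ell(w) + K_{i}$, where the sum runs over $\delta' \in \{ 0, 1 \}^{m_{i}+1}$. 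Because $\mathcal{U}$ is finite and $\epsilon/n$ is fixed, there are only finitely many such pairs $(\theta_{i}, K_{i})$, so I may set $\theta = \max_{i} \theta_{i} < 1$; this choice is independent of which $u_{i}$ the pigeonhole step selects for a given $w$.

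It remains to descend from level $m_{i}+1$ to level $M+1$. The key estimate is that producing the children of a single node never increases the total $\ell$-length by more than $C$: if $v$ is a reduced representative of $w_{\delta'}$, then $r(v)$ is in weak reduced form with $|r(v)|_{\ast} \leq |r(v)| \leq |v| = \ell(w_{\delta'})$, and Lemma \ref{lemma:nextline} gives $\ell(w_{\delta'0}) + \ell(w_{\delta'1}) \leq |(r(v))_{0}| + |(r(v))_{1}| \leq |r(v)|_{\ast} + C \leq \ell(w_{\delta'}) + C$. Summing this over all nodes at a fixed level shows that passing from level $j$ to level $j+1$ adds at most $2^{j}C$ to the total, so descending the finitely many levels from $m_{i}+1$ to $M+1$ increases the total by at most $\sum_{j=m_{i}+1}^{M} 2^{j}C \leq 2^{M+1}C$. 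Hence $\sum_{\delta} \ell(w_{\delta}) \leq \sum_{\delta'} \ell(w_{\delta'}) + 2^{M+1}C \leq \theta_{i}\, \ell(w) + K_{i} + 2^{M+1}C$, with $\delta$ ranging over $\{ 0, 1 \}^{M+1}$. Setting $K = \max_{i} K_{i} + 2^{M+1}C$ then gives the uniform bound.

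I expect the main obstacle to be bookkeeping rather than conceptual. One must confirm that the pigeonhole step genuinely produces a word that is $(\epsilon/n)$-good in the precise sense of Definition \ref{def:epsilongood1}, i.e. that the disjointness of occurrences guaranteed by Definition \ref{def:epsilongood2} really allows the length contributions to be split among the $u_{i}$ without double-counting the shared $t$-blocks; and one must check that the descent estimate via Lemma \ref{lemma:nextline} is applied to genuine reduced representatives, so that the $+C$ loss per node is the only penalty incurred. Once these two points are verified, taking the maxima of the finitely many constants delivers the required $\theta$ and $K$.
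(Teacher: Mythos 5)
Your proposal is correct and takes essentially the same route as the paper's proof: the same pigeonhole reduction to $(\epsilon/n)$-goodness with respect to a single $u_{i}$, the same application of Proposition \ref{proposition:bigun} (valid since $G_{M+1} \subseteq G_{m_{i}+1}$), and the same descent from level $m_{i}+1$ to level $M+1$ via Lemma \ref{lemma:nextline}, with maxima over the finitely many constants supplying $\theta$ and $K$. Your per-node $+C$ bookkeeping through weak reduced forms merely makes explicit the step the paper compresses into the constant $K_{2}(u_{i}, \epsilon/n) = \left( 2^{m_{i}+1} + \cdots + 2^{M} \right)C$.
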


\begin{proof}
Let $w$ be an arbitrary word satisfying the given conditions. It follows that $w$ is $\epsilon / n$-good with respect to some word $u_{i}$, which is good
at depth $m_{i}$. It follows from Proposition \ref{proposition:bigun} that
$$ \sum_{\delta_{i}} \ell( w_{\delta_{i}}) \leq \theta(u_{i}, \epsilon/n) \ell(w) + K(u_{i}, \epsilon/n),$$
where the sum on the left is over all $\delta_{i} \in \{ 0, 1 \}^{m_{i}+1}$. 

Lemma \ref{lemma:nextline} implies that
$$ \sum_{\delta} \ell(w_{\delta}) \leq \sum_{\delta_{i}} \ell(w_{\delta_{i}}) + \left( 2^{m_{i}+1} + 2^{m_{i} + 2} + \ldots + 2^{M} \right) C.$$
We set $K_{2}(u_{i}, \epsilon/n) = (2^{m_{i}+1} + \ldots + 2^{M})C$. (In fact, $K_{2}$ depends only on $u_{i}$.)

We have
$$ \sum_{\delta} \ell(w_{\delta}) \leq \theta(u_{i}, \epsilon/n)\ell(w) + K(u_{i}, \epsilon/n) + K_{2}(u_{i}, \epsilon/n).$$
We set $\theta(\mathcal{U}, \epsilon) = \mathrm{max} \{ \theta(u_{i}, \epsilon/n) \}$,
 $K(\mathcal{U}, \epsilon) = \mathrm{max} \{ K(u_{i}, \epsilon/n) \}$, and
$K_{2}(\mathcal{U}, \epsilon) = \mathrm{max} \{ K_{2}(u_{i}, \epsilon/n) \}$. 

It now follows easily that
$$ \sum_{\delta} \ell(w_{\delta}) \leq \theta(\mathcal{U}, \epsilon)\ell(w) + K(\mathcal{U}, \epsilon) + K_{2}(\mathcal{U}, \epsilon)$$
for all words $w$ satisfying the hypotheses.
\end{proof}

\subsection{Subexponential Growth} \label{subsection:growth}

\begin{definition}
Let $\mathcal{U}$ be a collection of good words. A reduced word $w$ is \emph{$\mathcal{U}$-bad}
if no word of $\mathcal{U}$ occurs as a subword of $w$.
\end{definition}

\begin{theorem} \label{thm:intgrowth}
For $i=1, \ldots, n$, let $u_{i}$ be a good-at-depth
$m_{i}$  word. Let $\mathcal{U} = \{ u_{1}, \ldots, u_{n} \}$. If there
is some $M>0$ such that, for all $L>0$, there are at most $M$ $\mathcal{U}$-bad words $w$ of length $L$ (i.e., $\ell(w) = L$), then
$G(A)$ has subexponential growth.
\end{theorem}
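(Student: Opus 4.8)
The plan is to run the standard Grigorchuk contraction argument, using Theorem \ref{theorem:final} as the engine and the $\mathcal{U}$-bad hypothesis to control the ``non-contracting'' words. Write $\gamma(N)$ for the number of elements $g \in G = G(A)$ with $\ell(g) \le N$; since the ball of radius $m+n$ is the product of the balls of radii $m$ and $n$, $\gamma$ is submultiplicative and the exponential growth rate $\lambda = \lim_{N \to \infty} \gamma(N)^{1/N}$ exists. As $G_{M+1}$ has finite index $K_0$ in $G$ with coset representatives of length $\le R$, the growth rate of $\gamma_1(N) := \#\{h \in G_{M+1} : \ell(h) \le N\}$ equals $\lambda$, so it suffices to show $\lambda = 1$; I would argue by contradiction and assume $\lambda > 1$. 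The basic mechanism is that the decomposition map $\psi : G_{M+1} \to G^{2^{M+1}}$, $h \mapsto (h_\delta)_{\delta \in \{0,1\}^{M+1}}$, is injective (an element fixing the first $M+1$ levels is determined by its action on the subtrees hanging below them), while Theorem \ref{theorem:final} says that the images of $\epsilon$-good elements are short in total.

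First I would bound the $\epsilon$-good elements. Fix $\epsilon$ (to be chosen) and let $\theta = \theta(\mathcal{U}, \epsilon) < 1$, $K = K(\mathcal{U}, \epsilon)$ be as in Theorem \ref{theorem:final}. If $h \in G_{M+1}$ has a reduced, $\epsilon$-good representative $w$ with $\ell(w) = \ell(h) \le N$, then $\sum_\delta \ell(h_\delta) = \sum_\delta \ell(w_\delta) \le \theta N + K$. The key elementary estimate is that for any $\mu > \lambda$ the series $Z(\mu) = \sum_{x \in G} \mu^{-\ell(x)}$ converges (because $\gamma(N) \le C_\mu \mu^N$), whence the number of $2^{M+1}$-tuples $(x_\delta)$ with $\sum_\delta \ell(x_\delta) \le S$ is at most
$$\mu^{S} \sum_{\sum_\delta \ell(x_\delta) \le S} \mu^{-\sum_\delta \ell(x_\delta)} \le \mu^{S}\, Z(\mu)^{2^{M+1}}.$$
Since $\psi$ is injective, the number of $\epsilon$-good $h$ with $\ell(h) \le N$ is therefore at most $Z(\mu)^{2^{M+1}} \mu^{\theta N + K}$, i.e. bounded by a constant times $\mu^{\theta N}$.

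Next I would bound the $\epsilon$-bad elements, and here is where the $\mathcal{U}$-bad hypothesis enters; each such element is counted by one of its reduced representatives, so it is enough to count $\epsilon$-bad reduced words of length $\le N$. Such a word $w$ decomposes (after fixing a maximal family of occurrences of words of $\mathcal{U}$ meeting only in $t$-blocks) as $w = c_0 g_1 c_1 \cdots g_p c_p$, where each $g_i$ is one of the finitely many words of $\mathcal{U}$, each gap $c_j$ is $\mathcal{U}$-bad, and (since $w$ is $\epsilon$-bad) the good part has total length $< \epsilon N$, forcing $p < \epsilon N / \ell_0$ for $\ell_0 = \min_i \ell(u_i)$. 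By hypothesis there are at most $M$ choices of $\mathcal{U}$-bad word of each length, so choosing the $g_i$ (at most $n^p$ ways), the length composition, and the gaps contributes at most $e^{\kappa(\epsilon) N}$ (times a polynomial), where a binary-entropy estimate shows $\kappa(\epsilon) \to 0$ as $\epsilon \to 0$. Combining the two bounds gives $\gamma_1(N) \le C \mu^{\theta N} + C' e^{\kappa(\epsilon) N}$; taking $N$th roots and letting $\mu \downarrow \lambda$ yields $\lambda \le \max(\lambda^{\theta}, e^{\kappa(\epsilon)})$. Assuming $\lambda > 1$, I would now choose $\epsilon$ small enough that $e^{\kappa(\epsilon)} < \lambda$; since $\theta(\epsilon) < 1$ automatically gives $\lambda^{\theta} < \lambda$, this forces $\lambda < \lambda$, a contradiction, so $\lambda = 1$.

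The main obstacle I anticipate is the $\epsilon$-bad count: making the decomposition $w = c_0 g_1 \cdots g_p c_p$ legitimate (so that the gaps really are $\mathcal{U}$-bad) requires care with the ``meet only in initial or terminal $t$-blocks'' convention of Definition \ref{def:epsilongood2}, and the counting of length compositions is cleanest when the values of $\ell$ are taken to be integral -- in general one must check that the achievable lengths below $N$ are only polynomially many, so that the entropy estimate and the bound $\kappa(\epsilon) \to 0$ survive. Everything else is bookkeeping around the two clean inputs: the injectivity of $\psi$ together with the contraction of Theorem \ref{theorem:final}, and the subexponential (indeed bounded-per-length) supply of $\mathcal{U}$-bad words.
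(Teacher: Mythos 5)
Your proposal is correct, and its skeleton matches the paper's proof: both split the count into $\epsilon$-good and $\epsilon$-bad reduced words, and both bound the $\epsilon$-bad words by the same combinatorics (a maximal family of $\mathcal{U}$-occurrences cuts $w$ into gaps that are $\mathcal{U}$-bad, hence at most $M$ choices per gap length, times $|\mathcal{U}|^{p}$ choices of good words, times a composition count estimated by a binomial coefficient/Stirling in the paper and by binary entropy in your write-up -- the same bound, with $\kappa(\epsilon) \to 0$ corresponding to the paper's choice of $\lambda_{1} < \lambda$). Where you genuinely diverge is the endgame. The paper stops once it knows that a positive proportion of elements of the ball in $G_{M+1}$ are $\epsilon$-good (hence satisfy the contraction of Theorem \ref{theorem:final}) and then cites Proposition 10 of \cite{BP} as a black box to derive the contradiction with exponential growth. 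You instead prove that implication from scratch: injectivity of the section map $\psi: G_{M+1} \to G^{2^{M+1}}$ reduces the good count to counting tuples with $\sum_{\delta} \ell(x_{\delta}) \leq \theta N + K$, which you bound by $\mu^{\theta N + K} Z(\mu)^{2^{M+1}}$ using the convergent series $Z(\mu) = \sum_{x} \mu^{-\ell(x)}$ for $\mu > \lambda$, arriving at $\lambda \leq \max(\lambda^{\theta}, e^{\kappa(\epsilon)})$ and the contradiction $\lambda < \lambda$. What this buys is a self-contained proof where the paper's is not, at the cost of some analytic bookkeeping (submultiplicativity of $\gamma$, equality of the growth rates of $G$ and the finite-index subgroup $G_{M+1}$, convergence of $Z(\mu)$) that Proposition 10 of \cite{BP} packages away; the order of quantifiers in your conclusion is sound, since for fixed $\epsilon$ one has $\theta(\epsilon) < 1$ and one only needs $\epsilon$ small enough that $e^{\kappa(\epsilon)} < \lambda$.

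Two care points you flag are indeed the right ones, and both are handled the same way the paper handles them. First, the legitimacy of the gap decomposition: if a word of $\mathcal{U}$ occurred entirely inside a gap $c_{j}$, it would be disjoint from the chosen occurrences and so could be added to a family chosen to maximize covered length (disjoint occurrences vacuously meet only in $t$-blocks), contradicting maximality; also note that subwords of reduced words are reduced (if $\ell(c_{j}) < |c_{j}|$ one could shorten $w$), so the gaps really fall under the hypothesis on $\mathcal{U}$-bad words. Second, integrality: the paper makes the same ``harmless assumption'' of integral lengths, and since $\widehat{\ell}$ takes finitely many positive values the achievable lengths below $N$ are only polynomially many, so both your entropy estimate and the convergence of $Z(\mu)$ survive without it.
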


\begin{proof}
For each $r>0$ and all small $\epsilon > 0$, we estimate the number $b_{\epsilon}(r)$ of $\epsilon$-bad words $w \in G_{N+1}$ of 
length $\ell$  precisely $r$. We will make the (harmless) assumption that each word has integral length.
We have the following estimate:
$$ b_{\epsilon}(r) \leq \binom{r+1}{\lfloor \epsilon r \rfloor + 1} M^{1 + \lfloor \epsilon r \rfloor} |\mathcal{U}|^{\lfloor \epsilon r \rfloor}.$$
The right half of the inequality assumes that $\lfloor \epsilon r \rfloor$ good words appear in $w$ (an absolute worst case). The 
factor $|\mathcal{U}|^{\lfloor \epsilon r \rfloor}$ counts the possible selections of those good words. The $\lfloor \epsilon r \rfloor$ good
words divide $w$ into $\lfloor \epsilon r \rfloor + 1$ pieces, all of which must be bad words. 
The binomial coefficient counts the number of solutions in non-negative integers 
to the inequality
$$\ell_{1} + \ldots + \ell_{\lfloor \epsilon r \rfloor + 1} \leq r - \lfloor \epsilon r \rfloor,$$ 
which over-counts the number of possible choices for the lengths of these bad words. Since the number of bad words of a given length
is uniformly bounded by $M$, we find that there are at most $M^{1+\lfloor \epsilon r \rfloor}$ choices for these words.

Next, we consider the number $B_{\epsilon}(n)$ of reduced words $w \in G_{N+1}$ of length less than or equal to $n$.
\begin{align*}
B_{\epsilon}(n) &= \sum_{r=0}^{n} b_{\epsilon}(r) \\
&\leq \sum_{r=0}^{n} \binom{r+1}{\lfloor \epsilon r \rfloor + 1} M^{1+\lfloor \epsilon r \rfloor} |\mathcal{U}|^{\lfloor \epsilon r \rfloor} \\
&\leq M^{1+ \epsilon n}|\mathcal{U}|^{\epsilon n} \sum_{r=0}^{n} \binom{r+1}{\lfloor \epsilon r \rfloor + 1} \\
&\leq M^{1+ \epsilon n}|\mathcal{U}|^{\epsilon n} (n+1) \binom{n+1}{\lfloor \epsilon n \rfloor + 1} \\
&\leq M^{1+ \epsilon n}|\mathcal{U}|^{\epsilon n} (n+1) \frac{(n+1)^{\lfloor \epsilon n \rfloor + 1}}{(\lfloor \epsilon n \rfloor + 1)!}.
\end{align*}
For large $n$, the latter quantity is approximately
$$ M^{1 + \epsilon n}|\mathcal{U}|^{\epsilon n} (n+1) e^{1+\lfloor \epsilon n \rfloor} 
\frac{1}{\sqrt{2 \pi ( \lfloor \epsilon n \rfloor + 1)}} \left( \frac{n+1}{\lfloor \epsilon n \rfloor + 1} \right)^{\lfloor \epsilon n \rfloor + 1},$$
by an application of Stirling's Formula to the quantity $(\lfloor \epsilon n \rfloor + 1)!$.

Now we suppose (for a contradiction) that $G$ (thus, $G_{N+1}$) has exponential growth. Thus, there is some $\lambda > 1$ such that the ball
of radius $n$ in $G_{N+1}$ has roughly $\lambda^{n}$ elements. The estimate of $B_{\epsilon}(n)$ implies that we can choose $\epsilon$ sufficiently
small that $B_{\epsilon}(n) \leq (\lambda_{1})^{n}$, for some $1 < \lambda_{1} < \lambda$. Thus, the proportion 
of $\epsilon$-good elements in the ball of radius $n$ to the total is at the least (roughly) $1 - \lambda_{1}/\lambda$. It follows that there
is some positive proportion of reduced words $w \in G_{N+1}$ that satisfy the inequality in Theorem \ref{theorem:final}. In view of Proposition 10 from
\cite{BP}, we are done. 
\end{proof}

\section{Examples of groups with intermediate growth} \label{section:main}

In practice, we will want to have one more type of next-line production, which is intermediate between the first-line string
production and the first-line production. 

\begin{definition} \label{def:special}
Let $v = tw_{1}tw_{2}t \ldots tw_{m}$, where each $w_{i} \in T$. Consider the following operation:
Replace each $w_{i}$ with its first-line production $((w_{i})_{0}, (w_{i})_{1})$
and collect all powers of $t$ at the end of the word, using the relation
$t(a,b) = (b,a)t$. Multiply the pairs coordinate-by-coordinate
with no cancellation (i.e., the multiplication occurs in $(A-\{ id \})^{\ast}$). The 
result is called the \emph{special production} of $v$.
\end{definition}

\begin{lemma} \label{lemma:special}
Let $v$ be as in Definition \ref{def:special}, and suppose that $(\widehat{v}_{0}, \widehat{v}_{1})(t)$ is
its special production. If $|\widehat{v}_{i}| > \ell(\widehat{v}_{i})$ for $i = 0$ or $1$, then 
$|v|_{\ast} > \ell(v_{0}) + \ell(v_{1})$.
\end{lemma}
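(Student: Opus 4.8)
The statement relates two different ways of processing $v$: the \emph{special production} $(\widehat{v}_0,\widehat{v}_1)(t)$, which replaces each $w_i$ by its first-line \emph{production} $((w_i)_0,(w_i)_1)$ (i.e.\ the already-reduced coordinates) and then multiplies with no cancellation; and the quantities $\ell(v_0),\ell(v_1)$ coming from the genuine first-line production of $v$. The hypothesis says that in at least one coordinate the special production is \emph{non-reduced}, meaning $|\widehat v_i|>\ell(\widehat v_i)$. The goal is to convert this slack into a strict drop $|v|_\ast>\ell(v_0)+\ell(v_1)$. First I would compare the special production of $v$ with its first-line \emph{string} production $(\widetilde v_0,\widetilde v_1)(t)$: the two agree as elements of $G(A)$ in each coordinate, since the special production is obtained from the string production simply by having applied $r$ to each factor $w_i$ before multiplying, and $r$ only substitutes equal-in-$G$ representatives. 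Hence $\widehat v_i=\widetilde v_i$ in $G$, and so by definition of $\ell$ we have $\ell(\widehat v_i)=\ell(\widetilde v_i)=\ell(v_i)$ for $i=0,1$.

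\textbf{Key steps in order.} The engine is the chain of inequalities already established in the proof of Lemma~\ref{lemma:nextline}(2) (the case of $w$ beginning and ending with blocks of $t$s, applied to $v=tw_1tw_2\cdots tw_m$). That argument shows
\[
|v|_\ast \;=\; \sum_{i=1}^{m}\bigl(|t|+|w_i|\bigr)\;\geq\;\sum_{i=1}^{m}\bigl(|(w_i)_0|+|(w_i)_1|\bigr),
\]
and then that the right-hand coordinate words $\widehat v_0,\widehat v_1$ of the special production are each obtained by multiplying a block $P_k$ of the $(w_i)_j$ in some order and then applying $r$. Tracking the $|\cdot|$-lengths, this gives
\[
\sum_{i=1}^{m}\bigl(|(w_i)_0|+|(w_i)_1|\bigr)\;=\;|\widehat v_0|+|\widehat v_1|,
\]
since no cancellation occurs in forming the special production (so $|\cdot|$ is additive across the factors of each $P_k$). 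Combining, $|v|_\ast\geq |\widehat v_0|+|\widehat v_1|$. Finally I invoke the hypothesis: for the coordinate $i$ with $|\widehat v_i|>\ell(\widehat v_i)$ the bound is strict, and using $\ell(\widehat v_i)=\ell(v_i)$ from the previous paragraph I obtain
\[
|v|_\ast\;\geq\;|\widehat v_0|+|\widehat v_1|\;>\;\ell(\widehat v_0)+\ell(\widehat v_1)\;=\;\ell(v_0)+\ell(v_1),
\]
which is exactly the conclusion.

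\textbf{Main obstacle.} The delicate point is the bookkeeping in the middle step: I must be sure that the special production uses the \emph{reduced} coordinates $(w_i)_0,(w_i)_1$ (so that the relevant inequality from Lemma~\ref{lemma:nextline} is the already-reduced one and $|\cdot|$-length is conserved without further collapse), while still correctly identifying $\widehat v_i$ with $v_i$ up to equality in $G$ so that $\ell$-values match. In particular I need the observation, used in Lemma~\ref{lemma:nextline}, that the coordinate words of the first-line \emph{string} production of $v$ are each a product of a partition block of the $(w_i)_j$, and that $|\cdot|$ is additive under the no-cancellation multiplication in $(A-\{id\})^\ast$; these are precisely the facts that make the non-reducedness in one coordinate survive as a strict inequality after summing over both coordinates. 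The argument is otherwise a direct assembly of Lemma~\ref{lemma:nextline} with the definition of $\ell$, so once the identification $\ell(\widehat v_i)=\ell(v_i)$ is pinned down the result follows cleanly.
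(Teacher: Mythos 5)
Your proof is correct and takes essentially the same route as the paper, which likewise gets $|v|_{\ast} \geq |\widehat{v}_{0}| + |\widehat{v}_{1}|$ from (the inequality chain in the proof of) Lemma \ref{lemma:nextline} and identifies $\ell(\widehat{v}_{i}) = \ell(v_{i})$ via Note \ref{note:idempotent} --- exactly your equal-in-$G$ observation, since $r$ preserves group elements and $v_{i} = r(\widehat{v}_{i})$. One stray phrase should be deleted: the coordinates $\widehat{v}_{0}, \widehat{v}_{1}$ of the special production are formed \emph{without} a final application of $r$ (as your own additivity step $\sum_{i}\bigl(|(w_{i})_{0}|+|(w_{i})_{1}|\bigr) = |\widehat{v}_{0}|+|\widehat{v}_{1}|$ requires), but this slip does not affect the argument.
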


\begin{proof}
We note that the inequality $|v|_{\ast} \geq |\widehat{v}_{0}| + |\widehat{v}_{1}|$ follows immediately from
Lemma \ref{lemma:nextline}, from which the desired conclusion follows by Note \ref{note:idempotent}.
\end{proof}

The groups associated to the automata with the kneading sequences $1(0^{k})^{\omega}$, $11(0)^{\omega}$, 
and $0(011)^{\omega}$ will be proved to have subexponential growth in this section. In fact, all also have superpolynomial
growth, since each group $G$ is commensurable with $G \times G$, and this is condition is known to imply superpolynomial growth (see \cite{GP},
for instance).

\subsection{The Case of $1(0^{k})^{\omega}$} \label{subsection:first}

We consider the automata $A_{k}$ ($k \geq 2$) with kneading sequence $1(0^{k})^{\omega}$.

The graph $\Gamma_{A_{4}}$ is depicted in Figure \ref{figure:2}, which also indicates our convention for naming the states 
of $A_{k}$ ($k \geq 2$). Namely, $x_{0}$ is the state adjacent to $t$ (the unique active state), $x_{1}$ is the first 
state we encounter while tracing directed edges backwards from $x_{0}$, $x_{2}$ is the second such state, and so forth.

\begin{figure}[!h] 
\begin{center}
\includegraphics{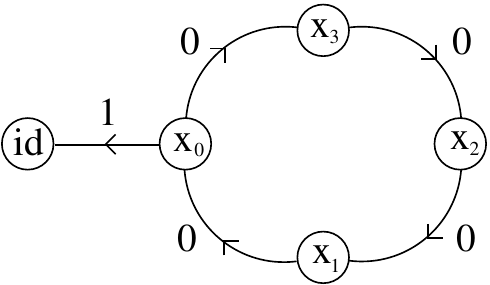}
\end{center}
\caption{The graph $\Gamma_{A}$ for the automaton with the kneading sequence $1(0^{4})^{\omega}$.}
\label{figure:2}
\end{figure}

\begin{lemma} \label{lemma:10k}
Let $x_{0}$, $x_{1}$, $\ldots$, $x_{k-1}$ be the inactive states in the automaton $A_{k}$.
$$ \langle x_{0}, \ldots, x_{k-1} \rangle \cong \left( \mathbb{Z}/2\mathbb{Z} \right)^{k}.$$
\end{lemma}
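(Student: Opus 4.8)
The plan is to make the action of each generator on $X^{\ast}$ completely explicit and then to recognize $\langle x_0, \ldots, x_{k-1}\rangle$ as a subgroup of an infinite elementary abelian $2$-group in which the generators have manifestly independent supports. First I would reconstruct $A_k$ from its kneading sequence $1(0^k)^{\omega}$, using the fact (noted after Definition~\ref{def:kneadingsequence}) that a kneading automaton is determined by its kneading sequence. The graph $\Gamma_{A_k}$ is a circle of $k$ edges carrying a one-edge sticker that ends at the active state $t$; with the states named as in Figure~\ref{figure:2}, the sticker is the edge $x_0 \to t$ with label $(1,1)$, the circle consists of the edges $x_i \to x_{i-1}$ (indices read modulo $k$) with labels $(0,0)$, and every omitted arrow leads to the identity state. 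Reading off $\tau$ and translating into the $(g_0,g_1)$ notation yields the recursion
$$ x_0 = (x_{k-1}, t), \qquad x_i = (x_{i-1}, id) \quad (1 \le i \le k-1), $$
where $t(0w) = 1w$ and $t(1w) = 0w$.

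Next I would compute, for each $i$, exactly which vertices $x_i$ moves. Iterating the recursion down the all-zero ray, the section of $x_i$ at the vertex $0^n$ is $x_{(i-n)\bmod k}$, so each $x_i$ fixes every vertex $0^n$ and every \emph{hanging vertex} $0^n 1$; moreover a copy of $t$ is deposited at $0^n 1$ precisely when the section at $0^n$ is $x_0$, that is, when $n \equiv i \pmod{k}$. Hence $x_i$ acts on the subtree rooted at $0^n 1$ as the transposition of its two branches when $n \equiv i \pmod k$, acts trivially on that subtree otherwise, and acts trivially away from all such subtrees.

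Finally I would assemble the conclusion. Each $x_i$ is an involution (by the order-$2$ fact recorded in Section~\ref{section:buxperez}, or directly from the description above). Let $T_n$ denote the subtree rooted at $0^n 1$ and let $e_n$ be the automorphism that transposes the two branches of $T_n$ and is trivial elsewhere. The $T_n$ are pairwise disjoint and are fixed setwise by every generator, so the $e_n$ are commuting involutions generating $\bigoplus_{n \ge 0} \mathbb{Z}/2\mathbb{Z}$, and $x_i = \sum_{n \equiv i \,(k)} e_n$. Because the residue classes modulo $k$ partition $\{0,1,2,\dots\}$, the elements $x_0, \ldots, x_{k-1}$ have pairwise disjoint supports in this direct sum, hence are linearly independent over $\mathbb{F}_2$; therefore they generate a subgroup isomorphic to $(\mathbb{Z}/2\mathbb{Z})^{k}$, as claimed.

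I expect the crux to be the portrait computation of the middle paragraph, and in particular the verification that products of the generators create no transpositions beyond the hanging vertices. This is exactly where the fact that every generator fixes every hanging vertex $0^n 1$ pointwise is used: in the formula for the local permutation of a product, the contributions then simply add modulo $2$ at the hanging vertices and cancel to the identity everywhere else, which is what legitimizes the passage to the direct sum $\bigoplus_n \mathbb{Z}/2\mathbb{Z}$.
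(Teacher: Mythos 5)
Your proposal is correct and takes essentially the same route as the paper: your computation that $x_i$ deposits a branch-swap exactly at the hanging vertices $0^{n}1$ with $n \equiv i \pmod{k}$ is the same explicit description of the action that the paper gives in terms of strings beginning with exactly $j$ zeros, and your $\mathbb{F}_2$-linear independence of disjointly supported indicator vectors repackages the paper's verification that the $2^{k}$ products $x_{i_1}\cdots x_{i_\alpha}$ are pairwise distinct because their supports lie in disjoint residue classes modulo $k$. The direct-sum formalism is a tidy bookkeeping device, but it introduces no new idea beyond the paper's argument.
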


\begin{proof}
The action of $x_{i}$ on a string $n_{1}n_{2} \ldots n_{m} \in X^{\ast}$
can be described as follows. If $n_{1}n_{2} \ldots n_{m}$ begins with a string of exactly $j$ $0$'s ($j \geq 0$)
followed by a $1$, then $x_{i} \cdot n_{1} \ldots n_{m} = n_{1} \ldots n_{m}$ if $j \not \equiv i$ modulo $k$.
If $j \equiv i$ modulo $k$, then $x_{i} \cdot n_{1}n_{2} \ldots n_{j}n_{j+1}\widehat{n}_{j+2}\ldots$
where $\widehat{n}_{j+2}$ is $0$ if $n_{j+2}$ is $1$, and $1$ if $n_{j+2}$ is $0$. (If $n_{1}n_{2} \ldots n_{m}$
contains no $1$, or if $m = j+1$, then $x_{i} \cdot n_{1} \ldots n_{m} = n_{1} \ldots n_{m}$.)
It follows easily from this description that each $x_{i}$ has order $2$, and that any two elements of
$\{ x_{0}, \ldots, x_{k-1} \}$ commute (since they have disjoint supports). 

We claim that the words $x_{i_{1}}\ldots x_{i_{\alpha}}$ ($\alpha \geq 0$) are all distinct, where
$i_{1}$, $\ldots$, $i_{\alpha}$ is an increasing sequence and $\{ i_{1}, \ldots, i_{\alpha} \} \subseteq \{ 0, \ldots, k-1 \}$.
In fact
$w := x_{i_{1}}x_{i_{2}} \ldots x_{i_{\alpha}}$ has a description analogous to that of $x_{i}$: 
If $n_{1}n_{2} \ldots n_{m}$ begins with a string of exactly $j$ $0$'s ($j \geq 0$)
followed by a $1$, then $w \cdot n_{1} \ldots n_{m} = n_{1} \ldots n_{m}$ if $j \not \equiv i$ modulo $k$, for any 
$i \in \{ i_{1}, \ldots, i_{\alpha} \}$.
If $j \equiv i$ modulo $k$ for some such $i$, then $w \cdot n_{1}n_{2} \ldots n_{j}n_{j+1}\widehat{n}_{j+2}\ldots$
where $\widehat{n}_{j+2}$ is $0$ if $n_{j+2}$ is $1$, and $1$ if $n_{j+2}$ is $0$. (If $n_{1}n_{2} \ldots n_{m}$
contains no $1$, or if $m = j+1$, then $w \cdot n_{1} \ldots n_{m} = n_{1} \ldots n_{m}$.)
It follows immediately that all such $x_{i_{1}}\ldots x_{i_{\alpha}}$ are distinct, for distinct sequences
$i_{1}$, $\ldots$, $i_{\alpha}$. 

Thus, $\langle x_{0}, \ldots, x_{k-1} \rangle$ contains at least $2^{k}$ elements,
which implies that $\langle x_{0}, \ldots, x_{k-1} \rangle \cong \left( \mathbb{Z}/2\mathbb{Z} \right)^{k}$, since our earlier
argument shows that the former group is a quotient of the latter.
\end{proof}

We assign weights to the states $t$, $x_{0}$, $\ldots$, $x_{k-1}$ as follows:
\begin{align*}
\widehat{\ell}(t) &= (k+2)^{2} \\
\widehat{\ell}(x_{i}) &= (k+1-i) \quad (0 \leq i \leq k-1)
\end{align*}
We let $T = \{ x_{i_{1}}\ldots x_{i_{\alpha}} \mid \alpha \geq 0; \, \, 0 \leq i_{1} < i_{2} < \ldots < i_{\alpha} \leq k-1 \}$.
It is not difficult to see that this choice $T$ has the property required by Definition \ref{def:T}.

\begin{lemma} \label{lemma:10kadmissible}
The length function $\ell: ( A - \{ id \})^{\ast} \rightarrow \mathbb{R}^{+} \cup \{ 0 \}$ induced by $\widehat{\ell}$
is admissible. In fact, $|t| + |w| > |w_{0}| + |w_{1}|$
for all $w \in T - \{ x_{0}x_{1}x_{2} \ldots x_{k-1} \}$ (i.e., for all words in $T$ except for the unique one using all $k$ states
$x_{0}$, $\ldots$, $x_{k-1}$).
\end{lemma}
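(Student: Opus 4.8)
The plan is to reduce the admissibility inequality to an elementary arithmetic comparison by computing the first-line production of every element of $T$ explicitly. First I would record the restrictions of the generators, which can be read off from the Moore diagram encoded by the kneading sequence $1(0^{k})^{\omega}$ (equivalently, from the description of the action in the proof of Lemma \ref{lemma:10k}): one finds
\[
x_{0} = (x_{k-1}, t), \qquad x_{i} = (x_{i-1}, id) \quad (1 \leq i \leq k-1).
\]
In particular each $x_{i}$ fixes the top level of the tree, so every $w \in T$ lies in $G_{1}$, and its first-line production is obtained by substituting these pairs into $w = x_{i_{1}} \cdots x_{i_{\alpha}}$ (with $i_{1} < \cdots < i_{\alpha}$), multiplying coordinate-wise, and applying $r$. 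Note that there is no trailing power of $t$ to collect, since $w$ itself contains no active letter; the only $t$ that appears arises inside the $1$-coordinate, from the restriction of $x_{0}$.

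Next I would split into two cases according to whether the letter $x_{0}$ occurs in $w$. The crucial structural observations are: (i) the only generator with nontrivial $1$-restriction is $x_{0}$, whose $1$-restriction is $t$; and (ii) on the $0$-coordinate the substitution sends $x_{i} \mapsto x_{i-1}$ for $i \geq 1$ and $x_{0} \mapsto x_{k-1}$. Since the indices $i_{1} < \cdots < i_{\alpha}$ are distinct, the indices appearing in the $0$-coordinate are again distinct, so the $0$-coordinate word is a product of distinct generators; by the defining property of $T$ (Definition \ref{def:T}) the rewriting $r$ merely sorts it and hence preserves $|\cdot|$. Thus $|w_{0}|$ equals the total weight of the shifted generators, while $|w_{1}| = |t| = (k+2)^{2}$ if $x_{0}$ occurs and $|w_{1}| = 0$ otherwise.

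With these in hand the computation telescopes. When $x_{0}$ does not occur, every generator undergoes a shift $x_{i_{j}} \mapsto x_{i_{j}-1}$, which raises its weight by exactly $1$, so $|w_{0}| - |w| = \alpha$ and $|w_{1}| = 0$; the inequality $|t| + |w| > |w_{0}| + |w_{1}|$ then reduces to $(k+2)^{2} > \alpha$, which is clear as $\alpha \leq k-1$. When $x_{0}$ does occur, the wrap-around $x_{0} \mapsto x_{k-1}$ lowers the weight by $k-1$ while each of the other $\alpha-1$ generators again gains $1$, giving $|w_{0}| - |w| = \alpha - k$; combined with $|w_{1}| = |t|$ this yields $|w_{0}| + |w_{1}| - (|t| + |w|) = \alpha - k \leq 0$, with equality precisely when $\alpha = k$, that is, when $w = x_{0}x_{1} \cdots x_{k-1}$.

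Every step is elementary once the restrictions are correct, so the point requiring care is the bookkeeping of weights rather than any genuine difficulty: one must verify that $\widehat{\ell}$ is calibrated so that an index shift $x_{i} \mapsto x_{i-1}$ costs exactly $+1$, that the single wrap-around $x_{0} \mapsto x_{k-1}$ costs exactly $-(k-1)$, and that the lone $t$ produced by $x_{0}$ contributes weight exactly $|t|$. It is the exact cancellation of these contributions that forces strict inequality for every $w$ except the full product $x_{0}x_{1} \cdots x_{k-1}$, where it degenerates to equality, and that simultaneously establishes admissibility in the sense of Definition \ref{def:admissible}. I would also dispatch the edge case $\alpha = 0$ and confirm in each case that no generator is repeated in the $0$-coordinate, so that the length-preservation of $r$ genuinely applies.
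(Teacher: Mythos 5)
Your proposal is correct and takes essentially the same route as the paper's proof: you compute the restrictions $x_{0} = (x_{k-1}, t)$ and $x_{i} = (x_{i-1}, 1)$, split into cases according to whether $x_{0}$ occurs in $w$, and compare weights letterwise, arriving at $(|w_{0}| + |w_{1}|) - (|t| + |w|) = \alpha - k$ in the $x_{0}$ case with equality exactly for $w = x_{0}x_{1}\cdots x_{k-1}$, just as the paper does. The only cosmetic difference is in the $x_{0}$-free case, where the paper uses the bound $|w_{0}| \leq \sum_{i=0}^{k-1} |x_{i}| = \frac{k^{2}+3k}{2} < |t|$ while you use $|w_{0}| = |w| + \alpha$ together with $|t| = (k+2)^{2} > \alpha$; both are immediate.
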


\begin{proof}
Note that
\begin{align*}
x_{0} &= (x_{k-1}, t); \\
x_{i} &= (x_{i-1}, 1) \quad (1 \leq i \leq k-1). 
\end{align*}
We first consider the case in which $w$ contains no occurrence of $x_{0}$ (i.e., $i_{1} > 0$). In this case
$|w_{1}| = 0$ and $w_{0} \in T$. It follows 
that
$$ |w_{0}| \leq \sum_{i=0}^{k-1} |x_{i}| = \frac{k^{2} + 3k}{2} < |t|,$$
from which the strict inequality $|t| + |w| > |w_{0}| + |w_{1}|$
follows immediately.

Next, we suppose that $w$ contains an occurrence of $x_{0}$ (i.e., $i_{1} = 0$), but $w \neq x_{0}x_{1} \ldots x_{k-1}$.
In this case $|w_{1}| = |t|$. Thus, to establish $|t| + |w| > |w_{0}| + |w_{1}|$, we want to show that $|w| > |w_{0}|$.
We write $w = x_{0}x_{i_{2}}\ldots x_{i_{\alpha}}$. Note that $\alpha < k$.
\begin{align*}
|w_{0}| &= |x_{k-1}x_{i_{2}-1}x_{i_{3}-1}\ldots x_{i_{\alpha}-1}| \\
&= 2 + \sum_{\beta = 2}^{\alpha}  |x_{i_{\beta}-1}| \\
&= 2+ \sum_{\beta = 2}^{\alpha} (k+1 - (i_{\beta}-1)) \\
&= (\alpha +1) + \sum_{\beta = 2}^{\alpha} (k+1 - i_{\beta}) \\
&< (k+1) + \sum_{\beta = 2}^{\alpha} (k+1 - i_{\beta}) \\
&= |w|, 
\end{align*}
as required.

Finally, one easily sees that $|t| + |w| = |w_{0}| + |w_{1}|$ if $w = x_{0}x_{1}\ldots x_{k-1}$.
\end{proof}

\begin{theorem}
Each $G(A_{k})$ ($k \geq 2$) has subexponential growth.
\end{theorem}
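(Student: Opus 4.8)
The plan is to apply Theorem \ref{thm:intgrowth}, so the work consists of exhibiting a finite collection $\mathcal{U}$ of good words for which the number of $\mathcal{U}$-bad reduced words of each fixed length is bounded. Write $P = x_{0}x_{1}\ldots x_{k-1} \in T$ for the representative using all $k$ inactive states. First I would produce the good words directly from the strict inequality in Lemma \ref{lemma:10kadmissible}. Fix $w \in T$ with $w \neq P$ and consider the subword $v = tw$, sitting inside a larger weak-reduced-form word in which $v$ is immediately followed by a $t$, so that $v$ has the shape required by Definition \ref{def:reducing} with $m = 1$. Writing $(w_{0}, w_{1})(t)$ for the first-line production of $w$, the first-line production of $v = tw$ is $(w_{1}, w_{0})(t)$, while a direct reading of the definition of $|\cdot|_{\ast}$ gives $|v|_{\ast} = |t| + |w|$. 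Hence
$$ \ell(v_{0}) + \ell(v_{1}) \leq |w_{0}| + |w_{1}| < |t| + |w| = |v|_{\ast},$$
the middle inequality being exactly the strict inequality of Lemma \ref{lemma:10kadmissible}. Thus $v = tw$ is reducing, and $u_{w} := twt$ is good at depth $0$ in the sense of Definition \ref{def:good}.

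Next I would set $\mathcal{U} = \{\, twt \mid w \in T,\ w \neq P \,\}$, a finite set (of size $2^{k} - 1$) all of whose members are good at depth $0$, so that $M = \max_{i} m_{i} = 0$ in the notation of Theorem \ref{thm:intgrowth}. It then remains to bound the number of $\mathcal{U}$-bad reduced words of each length. A reduced word contains no $t^{2}$, since a block $t^{2} = \mathrm{id}$ could be deleted to shorten it; so in weak reduced form it alternates single $t$'s with blocks from $T$. Call a block \emph{protected} if it has a $t$ on each side. If $w$ is $\mathcal{U}$-bad, then every protected block $B$ must equal $P$, for otherwise $tBt$ is a member of $\mathcal{U}$ occurring in $w$. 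Only the first and last blocks can fail to be protected, so a bad reduced word has the rigid shape
$$ (B_{0})\, t\, P\, t\, P\, t \cdots t\, P\, t\, (B_{m}),$$
with $B_{0}, B_{m} \in T$ (each possibly trivial, and the flanking $t$'s possibly absent) and a variable number $j \geq 0$ of interior copies of $P$.

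Finally I would count. For fixed choices of the two end blocks $B_{0}, B_{m}$ and of whether the word begins or ends with $t$ — finitely many possibilities, bounded independently of the target length $L$ — the total length is an affine function of $j$ with positive slope $|P| + C$, since each additional interior $P$ contributes one block of weight $|P|$ and one separating $t$ of weight $C$. Hence at most one value of $j$ yields length exactly $L$, and so the number of $\mathcal{U}$-bad words of length $L$ is at most a constant $M'$ not depending on $L$. Theorem \ref{thm:intgrowth} then gives that $G(A_{k})$ has subexponential growth.

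The main obstacle is the structural and counting step. Its crux is that $P = x_{0}\ldots x_{k-1}$ is precisely the block for which Lemma \ref{lemma:10kadmissible} yields only equality rather than the strict inequality, so $tPt$ fails to be reducing and $P$ is the \emph{unique} block that a bad word may repeat. It is this uniqueness that forces bad words to be essentially periodic with a single repeated syllable, and hence bounded in number at each length; the care required lies in verifying that no other configuration of protected blocks can occur, and in handling the boundary blocks (which are unconstrained) so that they contribute only a bounded multiplicative factor to the count.
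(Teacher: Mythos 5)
Your proof is correct and follows essentially the same route as the paper's: the identical set $\mathcal{U} = \{ twt \mid w \in T - \{x_{0}x_{1}\ldots x_{k-1}\} \}$ of depth-$0$ good words obtained from the strict inequality in Lemma \ref{lemma:10kadmissible}, the same observation that every interior block of a $\mathcal{U}$-bad word must equal $x_{0}x_{1}\ldots x_{k-1}$, and the same appeal to Theorem \ref{thm:intgrowth}. You merely spell out two steps the paper leaves implicit (the verification that $tw$ is reducing via its first-line production, and the affine-in-$j$ counting argument giving the uniform bound $M$), which is fine.
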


\begin{proof}
We first nominate a set $\mathcal{U}$ of good words.
Let $\mathcal{U} = \{ twt \mid w \in T - \{ x_{0}x_{1}\ldots x_{k-1} \} \}$.
Each word in this collection is good at depth $0$ (the subword $tw$ is a reducing word in each case, by 
Lemma \ref{lemma:10kadmissible}).

By Theorem \ref{thm:intgrowth}, to prove that the growth is subexponential
it is enough to show that there is an $M>0$ such that, for all $L > 0$, there are at most $M$ $\mathcal{U}$-bad 
words of length $L$. Let us consider a $\mathcal{U}$-bad word of the form $tw_{1}tw_{2}t\ldots tw_{m}t$
($m>0$). It is clear that each $w_{i}$ must be $x_{0}x_{1} \ldots x_{k-1}$, so there is exactly one 
$\mathcal{U}$-bad word of this form for any $m$. A general bad word has the
form $(w_{0})tw_{1}t\ldots tw_{m}t(w_{m+1})$, which shows that the number of such words is bounded above by
a uniform constant that is independent of $m$. This easily implies the existence of the required $M$. 
\end{proof}

\begin{note}
All of the groups in this class have bad isotropy groups, so Theorem \ref{thm:kneadingsequence}
does not guarantee that these groups are the iterated monodromy groups of complex polynomials.
\end{note}

\subsection{The Case of $11(0)^{\omega}$} \label{subsection:second}

We now consider the group $G(A)$ of the kneading automaton $A$ with kneading sequence $11(0)^{\omega}$. This 
automaton has already appeared as Example \ref{example:moore}. The group $G(A)$ is generated by the automorphisms
$t$, $a = (1,t)$, and $b = (b,a)$.

\begin{lemma}
The group $\langle a, b \rangle$ is isomorphic to $D_{4}$, the dihedral group of order $8$.
\end{lemma}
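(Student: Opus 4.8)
The plan is to show that $\langle a, b \rangle$ is isomorphic to $D_4$ by computing the orders of the generators and the order of their product, and then verifying the group is neither smaller (not $\mathbb{Z}/2\mathbb{Z} \times \mathbb{Z}/2\mathbb{Z}$ or cyclic) nor larger than expected. Recall from the excerpt that $a = (1, t)$ and $b = (b, a)$, and that every generator has order $2$, so $a^2 = b^2 = 1$. A group generated by two involutions is always dihedral (possibly infinite), so the entire problem reduces to computing the order of the element $ab$; if $ab$ has order $4$, then $\langle a, b \rangle \cong D_4$, the dihedral group of order $8$.

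First I would compute $ab$ as a tree automorphism using the recursive self-similar representations. Since $a = (1, t)$ and $b = (b, a)$, and both fix the top level of the tree, we have $ab = (1 \cdot b, t \cdot a) = (b, ta)$. Next I would compute powers: $(ab)^2 = (b \cdot b, ta \cdot ta) = (b^2, (ta)^2) = (1, (ta)^2)$, using $b^2 = 1$. To finish I must understand $(ta)^2$. Here $t$ is the active state acting by $t(0w) = 1w$, $t(1w) = 0w$, so $t$ swaps the two branches, meaning $ta$ does not fix the top level and must be handled via the rewriting rule $t(x,y) = (y,x)t$. Concretely, $ta = t(1, t) = (t, 1)t$, and then $(ta)^2 = (t,1)t(t,1)t = (t,1)(1,t)t^2 = (t \cdot 1, 1 \cdot t)(t^2) = (t, t)$, using $t^2 = 1$ at the top level. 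Thus $(ab)^2 = (1, (t,t))$, and then $(ab)^4 = \left((ab)^2\right)^2 = (1, (t,t))^2 = (1, (t^2, t^2)) = (1, (1,1))$, the identity.

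This computation shows $(ab)^4 = 1$, so $ab$ has order dividing $4$. To conclude the order is exactly $4$ and not $1$ or $2$, I would check that $(ab)^2 = (1, (t,t))$ is nontrivial: it acts nontrivially on level-two vertices lying in the ``$1$'' branch, since $t$ is the active (nonidentity) state. Therefore $ab$ has order exactly $4$, and since $a, b$ are involutions, $\langle a, b \rangle$ is a homomorphic image of the infinite dihedral group in which the rotation $ab$ has order $4$; this forces $\langle a, b \rangle \cong D_4$.

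The main obstacle is the bookkeeping with the active state $t$: because $t$ swaps branches, products like $ta$ do not decompose simply as coordinatewise products, and one must correctly apply the relation $t(x,y) = (y,x)t$ and track the parity of the power of $t$ at the top level before reading off the action on subtrees. The algebraic content is otherwise routine: two involutions generate a dihedral group, and identifying the order of their product pins down which dihedral group. I would present the explicit self-similar computations of $(ab)^2$ and $(ab)^4$ as the heart of the proof, and conclude by invoking the classification of groups generated by two involutions.
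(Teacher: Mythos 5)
Your proof is correct, and its second half takes a genuinely different (and shorter) route than the paper. Both arguments establish the upper bound the same way: compute $ab=(b,ta)$ and push $t$ past pairs via $t(x,y)=(y,x)t$ to get $(ta)^{2}=(t,t)$, hence $(ab)^{4}=1$ (the paper writes $(ab)^{4}=(b,at)^{4}$, but $at$ versus $ta$ is only an ordering convention and $(at)^{2}=(ta)^{2}=(t,t)$ either way). For the lower bound, the paper shows $|\langle a,b\rangle|\geq 8$ by introducing the homomorphism $\phi\colon\langle a,b\rangle\to\langle a,t\rangle$ given by restriction to the right subtree ($\phi(a)=t$, $\phi(b)=a$) and then computing the permutations that $a$ and $t$ induce on the four level-two vertices, whereas you simply observe that $(ab)^{2}=(1,(t,t))$ is a nontrivial automorphism and invoke the standard classification: a group generated by two involutions whose product has order exactly $n$ is dihedral of order $2n$. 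Your route avoids the auxiliary homomorphism and the permutation bookkeeping entirely, at the cost of citing that classification fact (which is elementary: $\langle ab\rangle$ is cyclic of order $4$, inverted by $a$, and $a\notin\langle ab\rangle$ since a cyclic group of order $4$ cannot be generated by involutions); the paper's version is more self-contained and exhibits a concrete finite quotient, which can be reassuring when the faithfulness of coordinates is not yet second nature. One small slip: $(ab)^{2}=(1,(t,t))$ fixes all vertices of length two and first moves words of length \emph{three} (e.g.\ $100\mapsto 101$), not level-two vertices as you wrote; the underlying point --- that $t\neq 1$ forces $(1,(t,t))\neq 1$ because the action on $X^{\ast}$ is faithful --- is unaffected. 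Your appeal to the earlier blanket fact that all generators are involutions is also legitimate, since the paper states and assumes it in Section \ref{subsection:length}, though the paper's proof re-derives $a^{2}=1$ and $b^{2}=1$ (the latter by induction) explicitly.
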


\begin{proof}
We first note that $a^{2} = (1, t^{2}) = (1,1) = 1$. We also have $b^{2} = (b^{2}, a^{2}) = (b^{2}, 1)$. It follows easily by
induction on the length $m$ of a word $n_{1}\ldots n_{m} \in \{0, 1 \}^{\ast}$ that $b^{2}$ also acts as the identity. Thus, $b^{2} = 1$.
$$ (ab)^{4} = (b,at)^{4} = (1, (at)^{4}).$$
Also,
$$ (at)^{4} = (atat)^{2} = \left[ (1,t)t(1,t)t \right]^{2} = (t,t)^{2} = 1.$$
It follows that $(ab)^{4} = 1$.

We've now shown that $\langle a, b \rangle$ is a quotient of $D_{4}$. We define a homomorphism
$\phi: \langle a, b \rangle \rightarrow \langle a, t \rangle$ by $\phi(a) = t$; $\phi(b) = a$.
(This homomorphism is restriction to the second coordinate, or restriction to the right branch of the
tree $\{ 0, 1 \}^{\ast}$.)
Now one notes that $\langle a, t \rangle$ has order $8$ as follows. We consider $00$, $01$, $10$, $11 \in \{ 0, 1 \}^{\ast}$.
Relabel these vertices $1$, $2$, $3$, and $4$, respectively. It is straightforward to check that $a$ acts as the permutation
$(34)$ and $t$ acts as $(13)(24)$. It follows that $|\langle a, t \rangle| \geq 8$, so
$|\langle a, b \rangle| \geq 8$. Thus, $|\langle a, b \rangle| = 8$,
so $\langle a, b \rangle \cong D_{4}$.
\end{proof}

We define $\widehat{\ell}: \{ a, b, t \} \rightarrow \mathbb{R}^{+}$ by the rule
$\widehat{\ell}(a) = \widehat{\ell}(b) = \widehat{\ell}(t) = 1$. We now fix representatives $w_{h} \in S^{\ast}$
for each $h \in \langle a, b \rangle$ as in Definition \ref{def:T}. Set
$$ T = \{ 1, a, ab, aba, abab, b, ba, bab \}.$$
The first-line productions of the elements of $T$ are (respectively) as follows:
$$ (1,1), (1,t), (b,ta), (b,tat), (1,tata), (b,a), (b,at), (1,ata).$$
One can easily check that the length function  $\ell: (A - \{ id \})^{\ast} \rightarrow \mathbb{R}^{+} \cup \{ 0 \}$
associated to $\widehat{\ell}$ is admissible. We summarize the relevant calculations in a table.

\begin{figure} [!h]
\begin{center}
\begin{tabular}{|c|c|c|} \hline
$w$ & $|t| + |w|$ & $|w_{0}| + |w_{1}|$ \\ \hline \hline
$1$ & $1$ & $0$ \\ \hline
$a$ & $2$ & $1$ \\ \hline
$ab$ & $3$ & $3$ \\ \hline
$aba$ & $4$ & $4$ \\ \hline
$abab$ & $5$ & $4$ \\ \hline
$b$ & $2$ & $2$ \\ \hline
$ba$ & $3$ & $3$ \\ \hline
$bab$ & $4$ & $3$ \\ \hline
\end{tabular}
\end{center}
\end{figure}

We let $\alpha_{n}$ be the element of $T$ of word-length $n$ beginning with a. Let $\beta_{n}$ denote the element of
word-length $n$ beginning with $b$. Thus, $T = \{ 1, \alpha_{1}, \alpha_{2}, \alpha_{3}, \alpha_{4}, \beta_{1}, \beta_{2}, \beta_{3} \}$.

\begin{proposition} \label{prop:good110}
The following families $Pi$ of 
words in the generators $\{a, b, t \}$ are good, where each box $\Box$ represents an occurrence
of a string from $\{ \beta_{1}, \beta_{2}, \alpha_{2} \}$:
\begin{enumerate}
\item[P0.] $\, \, t \alpha_{1} t$, $t \alpha_{4} t$, $t \beta_{3} t$, $t \alpha_{3} t$;
\item[P1.] $\, \, t\Box t \beta_{1} t \Box t \beta_{1} t \Box t$;
\item[P2.] $\, \, t \beta_{2} t \Box t \beta_{1} t \Box t \beta_{2} t$;
\item[P3.] $\, \, t \alpha_{2} t \Box t \beta_{1} t \Box t \beta_{2} t$;
\item[P4.] $\, \, t \beta_{2} t \Box t \beta_{1} t \Box t \alpha_{2} t$;
\item[P5.] $\, \, t \alpha_{2} t \Box t \beta_{1} t \Box t \alpha_{2} t$;
\item[P6.] 
$\, \, t \beta_{2} t \Box t \beta_{2} t \Box t \beta_{2} t \Box t \alpha_{2} t \Box t \alpha_{2} t \Box t \alpha_{2} t$, and
\item[P7.] $\, \, t \alpha_{2} t \Box t \beta_{2} t$.
\end{enumerate}
Note that the union $\mathcal{U} = \bigcup_{i=0}^{7} P_{i}$ is also finite.
\end{proposition}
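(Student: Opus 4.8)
The plan is to verify goodness family by family, showing that for every assignment of the boxes to $\{\beta_1, \beta_2, \alpha_2\}$ the resulting word satisfies Definition \ref{def:good}. The computations rest on the first-line productions already displayed for the elements of $T$, together with those of the three admissible boxes,
$$ \beta_1 \mapsto (b,a), \qquad \beta_2 \mapsto (b, at), \qquad \alpha_2 \mapsto (b, ta). $$
The decisive observation is that all three boxes share the same left coordinate, $(\Box)_0 = b$, while $(\Box)_1 \in \{a, at, ta\}$ always has the same $\widehat{\ell}$-length as $\Box$ itself. Consequently a box contributes $b$ to whichever coordinate of the first-line production corresponds to the parity of its position: in $P0$ and $P2$--$P7$ the boxes occupy even positions, so the left coordinate $w_0$ is completely independent of the box choices, whereas in $P1$ the boxes occupy odd positions, so it is the right coordinate $w_1$ that is box-independent. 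This is exactly what will let us ignore the boxes and descend through a fixed coordinate.

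I would first settle $P0$. For $w \in \{\alpha_1, \alpha_4, \beta_3\}$ the table gives $|t| + |w| > |w_0| + |w_1| \ge \ell(w_0) + \ell(w_1)$, so $tw$ is a reducing subword of $twt$ and these words are good at depth $0$. The remaining word $t\alpha_3 t$ yields only equality in the table, but its left coordinate is $tat$, and $ta$ is a reducing subword of $tat$ because $\ell((ta)_0) + \ell((ta)_1) = \ell(t) = 1 < 2 = |ta|_{\ast}$; hence $t\alpha_3 t \succcurlyeq tat$ shows it is good at depth $1$.

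For $P2$--$P7$ I would compute the box-independent left coordinate, apply $r$, and locate inside it a protected occurrence of one of the $P0$ words. Using the relations of $\langle a, b\rangle \cong D_4$ (concretely $baba = \alpha_4$, $ababa = \beta_3$, $babab = \alpha_3$), one finds that the reduced left coordinate of $P2$ and $P5$ contains $t\alpha_4 t$, that of $P3$ and $P4$ contains $t\beta_3 t$, and that of $P7$ contains $t\alpha_3 t$; since $t\alpha_3 t$ is itself good at depth $1$, this makes $P7$ good at depth $2$ and $P2$--$P5$ good at depth $1$. The family $P1$ must be handled differently, at depth $0$: its box-independent right coordinate collapses from $babab$ to $\alpha_3$, a saving of two units, and this saving is just enough to make the entire word with its terminal $t$ deleted a reducing subword, uniformly in the boxes.

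The main obstacle is $P6$, and more generally the bookkeeping of these $D_4$ collapses. Here the box-independent left coordinate does not contain a $P0$ word after a single step -- every protected subword one writes down gives only the equality $\ell(v_0) + \ell(v_1) = |v|_{\ast}$ -- so one must descend a second time, at which point the left coordinate reduces to a word containing $t\beta_3 t$, showing $P6$ is good at depth $2$. Proving that this second descent always reaches a genuinely reducing configuration, independently of the five box choices, is the delicate point; once the box-independence noted above is in hand, the remaining work is a finite and essentially mechanical verification driven by the production table and the three collapses.
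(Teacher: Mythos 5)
Your proposal is correct and follows essentially the same route as the paper's proof: both verify each family by computing first-line (or special) productions, exploiting that every box contributes $b$ to the relevant coordinate, using Lemma \ref{lemma:special} to get a reducing subword for $P1$, and descending to the depth-$0$ words of $P0$ via the $D_{4}$ collapses $baba = abab$, $ababa = bab$, $babab = aba$ for the remaining families. The only deviations are harmless: you establish $P3$ at depth $1$ where the paper gets depth $0$ (via its special production), and you handle $P6$ by an explicit second descent to $t\beta_{3}t$ where the paper routes through membership in $P4$ --- both arguments yield goodness, which is all the proposition asserts.
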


\begin{proof}
We consider each of the above families of words.
\begin{enumerate}
\item[P0.] It is clear from the table that $t \alpha_{1}$, $t \alpha_{4}$, and $t \beta_{3}$ are reducing words in 
$t \alpha_{1}t$, $t \alpha_{4}t$, and $t \beta_{3}t$ (respectively), so the latter words are good at depth $0$.
The first-line production of $t \alpha_{3} t$ is $(tat, b)$. It follows that $tat = t\alpha_{1} t \preccurlyeq t \alpha_{3}t$, so 
$t \alpha_{3} t$ is good at depth $1$.
\item[P1.] The special production of $t \Box t \beta_{1} t \Box t \beta_{1} t \Box$ has the form
$( \underline{\hspace{8pt}}, babab)t$. Since $r(babab) = aba$ and $|aba| < |babab|$, each such word is a reducing word in 
$t \Box t \beta_{1} t \Box t \beta_{1} t \Box t$ by Lemma \ref{lemma:special}. It follows that $t \Box t \beta_{1} t \Box t \beta_{1} t \Box t$
is good at depth $0$. 
\item[P2.] The first-line production of a word $w$ in this family takes the form $(at \alpha_{4}t, \underline{\hspace{8pt}} )$. It follows that
$t \alpha_{4} t \preccurlyeq w$, for all words $w$ in $P2$. Therefore, each such $w$ is good at depth $1$, since
$t \alpha_{4} t \in P0$ is good at depth $0$.
\item[P3.] The special production of a word in this family takes the form $(tababat, \underline{\hspace{8pt}} )$. 
It follows (as in P1) that
each is good at depth $0$; the reducing word omits the final $t$.
\item[P4.] The first-line production of a word $w \in P4$ has the form $(atbabta, \underline{\hspace{8pt}} )$. Thus,
$t \beta_{3} t \preccurlyeq w$ for all words $w$ in $P4$. Since $t \beta_{3} t \in P0$ is good at depth $0$, 
we conclude that each $w \in P4$ is good at depth $1$.
\item[P5.] The first-line production of a word $w \in P5$ has the form $(tababta, \underline{\hspace{8pt}} )$. It follows that
$t \alpha_{4} t \preccurlyeq w$ for all words $w \in P5$. It now follows that each $w$ is good at depth $1$, since $t \alpha_{4} t$ is 
good at depth $0$.
\item[P6.] The first-line production of a word $w$ in $P6$ has the form $(at\beta_{2}t\beta_{2}t\beta_{1}t\alpha_{2}t\alpha_{2}ta, \underline{\hspace{8pt}})$. 
It follows that $t \beta_{2} t \beta_{2} t \beta_{1} t \alpha_{2} t \alpha_{2} t \preccurlyeq w$. The former
word is in family $P4$, so it is good at depth $1$. Thus $w$ is good at depth $2$.
\item[P7.] The first-line production of a word $w$ in $P7$ has the form $(t \alpha_{3} t, \underline{\hspace{8pt}})$. Thus
we have $t \alpha_{3} t \preccurlyeq w$, so $w$ is good at depth $2$ because $t \alpha_{3} t \in P0$ is good at depth $1$.
\end{enumerate}
The final statement is clear.
\end{proof} 

\begin{theorem} \label{thm:110growth}
The group $G(A)$ of the automaton $A$ with kneading sequence $11(0)^{\omega}$ has subexponential growth.
The group $G(A)$ is also the iterated monodromy group of a complex post-critically finite quadratic polynomial. 
\end{theorem}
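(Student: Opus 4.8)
The plan is to treat the two assertions separately: subexponential growth through the apparatus of Section~\ref{section:buxperez}, and the identification with an iterated monodromy group through Theorem~\ref{thm:kneadingsequence}. For subexponential growth I would apply Theorem~\ref{thm:intgrowth} to the finite set $\mathcal{U} = \bigcup_{i=0}^{7} P_{i}$ of good words supplied by Proposition~\ref{prop:good110}; by that theorem it is enough to produce a single constant $M$ such that, for every $L$, there are at most $M$ reduced $\mathcal{U}$-bad words $w$ with $\ell(w) = L$. The starting point is structural: because $t^{2} = 1$ and each element of $H = \langle a, b\rangle$ has a unique shortest representative in $T$, a reduced word is, apart from its two end factors, an alternating product of single $t$'s and nontrivial elements of $T$, so every interior element of $T$ is flanked by $t$'s. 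Family P0 then forces each interior letter of a bad word to lie in $\{\alpha_{2}, \beta_{1}, \beta_{2}\}$, since any interior occurrence of $\alpha_{1}$, $\alpha_{3}$, $\alpha_{4}$, or $\beta_{3}$ produces one of the good subwords $t\alpha_{1}t$, $t\alpha_{3}t$, $t\alpha_{4}t$, $t\beta_{3}t$.

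The combinatorial heart is to show that strings over $\{\alpha_{2}, \beta_{1}, \beta_{2}\}$ that avoid families P1--P7 are rigid. First I would banish $\beta_{1}$ from the deep interior: for an occurrence of $\beta_{1}$ with enough interior letters on both sides, inspecting the two letters at distance two gives a complete case split --- if either of them equals $\beta_{1}$ then family P1 applies, and in each of the four cases in which both lie in $\{\alpha_{2}, \beta_{2}\}$ exactly one of P2--P5 applies. Hence every $\beta_{1}$ sits within a bounded distance of an end of the word. On the remaining middle portion, which uses only $\{\alpha_{2}, \beta_{2}\}$, family P7 forbids $\alpha_{2}\Box\beta_{2}$, i.e.\ an $\alpha_{2}$ two places before a $\beta_{2}$; restricting to a fixed parity of positions, this forces each parity class to have the shape $\beta_{2}^{a}\alpha_{2}^{b}$, and family P6 then forbids $\beta_{2}^{3}\alpha_{2}^{3}$ inside a parity class, so $a \le 2$ or $b \le 2$. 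Thus each parity class, hence the whole middle, is determined by a bounded amount of data once its length is fixed; combined with the bounded end-regions this yields the constant $M$, and Theorem~\ref{thm:intgrowth} gives subexponential growth.

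For the second assertion I would verify the hypotheses of Theorem~\ref{thm:kneadingsequence} for $A$. That $A$ is an invertible reduced kneading automaton over $\{0,1\}$ was recorded in Example~\ref{example:moore}. For planarity (Definition~\ref{def:planar}) I would take the ordering $b, a, t$ of the nontrivial states and compute, by repeatedly rewriting $\widehat{a}\,\widehat{x} \mapsto \widetilde{x}\,\widetilde{a}$, that $((bat)^{2})_{\mid 0} = bat$ and $((bat)^{2})_{\mid 1} = atb$, each a cyclic shift of $bat$. Finally, the kneading sequence $11(0)^{\omega}$ is pre-periodic with periodic part $v = 0$, which is not a proper power, so $A$ has no bad isotropy groups. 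Theorem~\ref{thm:kneadingsequence} then realizes $G(A)$ as $\mathrm{IMG}(p)$ for a post-critically finite quadratic polynomial $p$.

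The main obstacle is the combinatorial bookkeeping of the second paragraph: checking that the distance-two neighbors of an interior $\beta_{1}$ are exhaustively covered by P1--P5, and that P6 and P7 together collapse the $\{\alpha_{2}, \beta_{2}\}$-middle to finitely many shapes of each length. By contrast, the reduction via Theorem~\ref{thm:intgrowth} and the planarity and isotropy checks are routine.
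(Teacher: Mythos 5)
Your proposal is correct and follows essentially the same route as the paper: both verify planarity by a direct restriction computation (the paper uses the ordering $a,b,t$ and checks $abtabt_{\mid 0} = bta$, $abtabt_{\mid 1} = tab$, while you use the equivalent ordering $b,a,t$), and both bound the $\mathcal{U}$-bad words by noting that P0 confines interior blocks to $\{\alpha_{2}, \beta_{1}, \beta_{2}\}$, that P1--P5 exclude $\beta_{1}$ except near the ends, and that P7 and P6 force each parity class to be $\beta_{2}^{a}\alpha_{2}^{b}$ with $a \leq 2$ or $b \leq 2$. Your reorganization (banishing interior $\beta_{1}$ first, then collapsing the $\{\alpha_{2},\beta_{2}\}$-middle) is a cleaner bookkeeping of the paper's case analysis on the leading letter of each parity class, but it is the same argument, and your explicit check that $v = 0$ is not a proper power only makes express what the paper treats as obvious.
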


\begin{proof}
We prove the second statement first. In fact, by Theorem \ref{thm:kneadingsequence}, it is enough to show that
$A$ is planar. This follows easily from the observation that
$abtabt_{\mid 0} = bta$ and $abtabt_{\mid 1} = tab$.
 
We now turn to the first statement. 
Consider the collection of all reduced words that are $\mathcal{U}$-bad (where $\mathcal{U}$ is as in Proposition \ref{prop:good110}).
By Theorem \ref{thm:intgrowth}, it is enough to show that there is $M>0$ such that, for a given $L$, the number of $\mathcal{U}$-bad words
of length $L$ is less than $M$. It is clear that we may restrict our attention to large $L$.

We first consider bad words $w$ that begin and end with $t$. Thus $w = t w_{1} t w_{2} t \ldots t w_{m} t$ (for some large integer
$m$). It is clear (from the description of $P0$) that $w_{i} \in \{ \beta_{1}, \beta_{2}, \alpha_{2} \}$ for each $i$. We consider the possibilities
for the sequence $w_{1}, w_{3}, w_{5}, \ldots, w_{2k-1}$, where $2k-1$ is the largest odd number less than or equal to $m$. 

First, suppose that $w_{1} = \alpha_{2}$. It follows that $w_{1}, w_{3}, w_{5}, \ldots, w_{2k-1}$ has one of the forms:
\begin{enumerate}
\item $\alpha_{2}, \alpha_{2}, \ldots, \alpha_{2}, \alpha_{2}$;
\item $\alpha_{2}, \alpha_{2}, \ldots, \alpha_{2}, \beta_{1}$;
\item $\alpha_{2}, \alpha_{2}, \ldots, \alpha_{2}, \beta_{1}, \beta_{1}$ (if $2k-1< m$).
\end{enumerate}
Indeed, $\beta_{2}$ cannot follow $\alpha_{2}$, since that would create a subword from $P7$. If $\beta_{1}$ follows $\alpha_{2}$
except in the last place (or in the second-to-last place, if $2k-1 < m$), then the next word in the sequence $w_{1}, \ldots, w_{2k-1}$ 
will be $\beta_{1}$, $\beta_{2}$, or $\alpha_{2}$, which will create a subword from $P1$, $P3$, or $P5$ (respectively). 

Now we consider the sequences $w_{1}, \ldots, w_{2k-1}$ such that $w_{1} = \beta_{2}$ and some subsequent $w_{i}$ is $\alpha_{2}$
(for an odd subscript $i$). We claim that no occurrence of $\beta_{1}$ can appear between $w_{1} (= \beta_{2})$ and the earliest
occurrence of $\alpha_{2}$. If $w_{j} = \beta_{1}$ is the earliest such occurrence, then $w_{j-2} = \beta_{2}$ and any choice
of $w_{j+2} \in \{ \beta_{1}, \beta_{2}, \alpha_{2} \}$ yields a subword from $P1$, $P2$, or $P4$ (respectively). This proves the claim.
Now note that, once an $\alpha_{2}$ occurs in $w_{1}, w_{3}, \ldots, w_{2k-1}$, the remainder of the sequence takes one of the forms
enumerated above, from the case in which $w_{1} = \alpha_{2}$. In view of $P6$, the only possibilities are that
$w_{1}, w_{3}, \ldots, w_{2k-1}$ begins with $2$ or fewer occurrences of $\beta_{2}$ followed by a sequence of one of the forms (1)-(3),
or that $w_{1}, \ldots, w_{2k-1}$ begins with a long string of $\beta_{2}$'s, followed by a sequence of one of the forms (1)-(3) that 
contains $2$ or fewer occurrences of $\alpha_{2}$.

Now we consider the sequences $w_{1}, \ldots, w_{2k-1}$ such that $w_{1} = \beta_{2}$ and $w_{1}, \ldots, w_{2k-1} \in \{ \beta_{1}, \beta_{2} \}$.
In view of $P1$, $w_{1}, \ldots, w_{2k-1}$ contains neither a subsequence of the form $\beta_{1}, \beta_{1}, \beta_{2}$,
nor one of the form $\beta_{1}, \beta_{1}, \beta_{1}$. In view of $P2$, it cannot contain a subsequence of the form $\beta_{2}, \beta_{1}, \beta_{2}$.
It follows that $w_{1}, \ldots, w_{2k-1}$ is a sequence of $\beta_{2}$'s ending with two or fewer occurrences of $\beta_{1}$.

Now suppose that $w_{1} = \beta_{1}$. If $w_{1}, \ldots, w_{2k-1}$ begins with three or more occurrences of $\beta_{1}$, then we create a subword
from $P1$, an impossibility. Thus, either $w_{3}$ or $w_{5}$ is in $\{ \beta_{2}, \alpha_{2} \}$, and the remainder of the sequence takes one
of the previously-discussed forms.

We have now completely described the possibilities for $w_{1}, \ldots, w_{2k-1}$. Our discussion shows that $w_{1}, \ldots, w_{2k-1}$
is essentially a constant sequence of $\alpha_{2}$'s or of $\beta_{2}$'s, with a small amount of variation possible  at the beginning
and end. It follows that the number of such sequences is bounded by a constant that is independent of $k$. A similar analysis establishes
a similar form for the sequence $w_{2}, w_{4}, \ldots$. It follows that the number of $\mathcal{U}$-bad words of the form
$t w_{1} t \ldots t w_{m} t$ is bounded, by a bound that is independent of $m$.

A general $\mathcal{U}$-bad word has the form $(w_{0})tw_{1} t \ldots t w_{m} t (w_{m+1})$, for some $w_{0}, w_{m+1} \in T$, 
and it follows immediately that the number of such words is uniformly bounded, regardless of $m$. Theorem \ref{thm:intgrowth} now implies that
$G(A)$ has subexponential growth.
\end{proof}

\subsection{The Case of $0(011)^{\omega}$} \label{subsection:third}

We now consider the automaton $A$ with kneading sequence $0(011)^{\omega}$. The graph $\Gamma_{A}$ appears in Figure \ref{figure:3}, which
also indicates our convention for labeling the states.

\begin{figure} [!h]
\begin{center}
\includegraphics{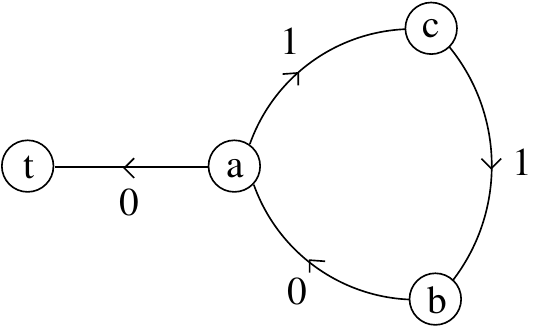}
\end{center}
\caption{The graph $\Gamma_{A}$ for the automaton $A$ with kneading sequence $0(011)^{\omega}$.}
\label{figure:3}
\end{figure}

\begin{lemma}
$\langle a, b, c \rangle \cong \langle a, b \rangle \times \langle c \rangle \cong D_{8} \times \mathbb{Z}/2\mathbb{Z}$,
where $D_{8}$ is the dihedral group of order $16$.
\end{lemma}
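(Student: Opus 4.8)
The plan is to begin by reading off the first-line productions of the three inactive generators from the Moore diagram recorded in Figure \ref{figure:3}. With the labeling convention shown there these should take the form $a = (t, c)$, $b = (a, 1)$, and $c = (1, b)$ (possibly after interchanging the roles of the two branches). Taking these three identities as the starting point, I would split the lemma into two independent claims: that $\langle c \rangle$ splits off as a central direct factor isomorphic to $\mathbb{Z}/2\mathbb{Z}$, and that the complementary factor $\langle a, b \rangle$ is dihedral of order $16$.

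For the central-factor claim I would first verify that $c$ commutes with both $a$ and $b$ by a short coordinatewise computation: multiplying the pairs gives $bc = (a, b) = cb$ at once, and then $ac = (t, cb) = (t, bc) = ca$ because $b$ and $c$ commute. Since $c$ commutes with the two generators it is central in $\langle a, b, c\rangle$, and it has order $2$ (every generator of a kneading automaton with pre-periodic kneading sequence has order $2$). The one remaining point is that $\langle c \rangle \cap \langle a, b\rangle = 1$, i.e.\ $c \notin \langle a, b\rangle$. Granting for the moment that $\langle a, b\rangle \cong D_{8}$, its unique central involution is $(ab)^{4}$; if $c$ lay in $\langle a, b\rangle$ it would have to equal this element, since $c$ is central. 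But $c = (1, b)$ acts trivially on the left subtree and nontrivially on the right, whereas $(ab)^{4} = ((ta)^{4}, 1)$ has the opposite behavior, so $c \neq (ab)^{4}$. Hence $c \notin \langle a, b\rangle$, and the decomposition $\langle a, b, c\rangle = \langle a, b\rangle \times \langle c\rangle$ follows.

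It then remains to identify $\langle a, b\rangle$. Since $a$ and $b$ are distinct involutions, $\langle a, b\rangle$ is dihedral and its order is $2\,\mathrm{ord}(ab)$, so the whole matter reduces to computing $\mathrm{ord}(ab)$. I would compute $ab = (ta, c)$, so $(ab)^{k} = ((ta)^{k}, c^{k})$ and $\mathrm{ord}(ab) = \mathrm{ord}(ta)$ (the factor $c^{k}$ only forces $k$ even, which $\mathrm{ord}(ta)$ already supplies). Now $ta$ swaps the top level, so $\mathrm{ord}(ta) = 2\,\mathrm{ord}((ta)^{2})$, and a direct pair computation using the relation $t(x, y) = (y, x)t$ gives $(ta)^{2} = (ct, tc)$. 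Because $tc = (ct)^{-1}$, this has order $\mathrm{ord}(ct)$; and $(ct)^{2} = (b, b)$ has order $2$, so $\mathrm{ord}(ct) = 4$. Unwinding, $\mathrm{ord}(ta) = 8$, hence $\mathrm{ord}(ab) = 8$ and $\langle a, b\rangle \cong D_{8}$, the dihedral group of order $16$; this also retroactively justifies the central-involution argument. Assembling the two claims yields $\langle a, b, c\rangle \cong \langle a, b\rangle \times \langle c\rangle \cong D_{8} \times \mathbb{Z}/2\mathbb{Z}$.

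The main obstacle is the recursive order computation for $ab$: keeping track of which coordinate receives which factor under $t(x, y) = (y, x)t$, and correctly reducing $\mathrm{ord}(ab) \to \mathrm{ord}(ta) \to \mathrm{ord}(ct) \to \mathrm{ord}(b)$ without an arithmetic slip. A cleaner but equivalent alternative, mirroring the $D_{4}$ computation in the $11(0)^{\omega}$ case, is to use the branch-restriction homomorphism $\langle a, b\rangle \to \langle t, a\rangle$, $a \mapsto t$, $b \mapsto a$ (legitimate since $a, b \in G_{1}$), to pin the order from below at $16$ once $(ab)^{8} = 1$ is known; either route relies on the same core pair computations.
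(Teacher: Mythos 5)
Your proposal is correct and follows essentially the same route as the paper: the same productions $a=(t,c)$, $b=(a,1)$, $c=(1,b)$, the same commutation checks for $c$, and the same recursive computations $(ta)^{2}=(ct,tc)$, $(ct)^{2}=(b,b)$, $(ab)^{n}=\left((ta)^{n},c^{n}\right)$ pinning $\mathrm{ord}(ab)=8$ and hence $\langle a,b\rangle\cong D_{8}$. The only local divergence is your proof that $c\notin\langle a,b\rangle$ (centrality forces $c=(ab)^{4}$, ruled out by comparing actions on the two subtrees), where the paper instead observes that every element of $\langle a,b\rangle$ has second coordinate a power of $c$ while $c=(1,b)$ with $b\neq c$; both arguments are valid, and your ordering avoids circularity since the $D_{8}$ identification does not use the intersection claim.
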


\begin{proof}
First, we note that $a^{2} = (t^{2}, c^{2}) = (1, c^{2})$;
$c^{2} = (1, b^{2})$; $b^{2} = (a^{2}, 1)$. It follows from this that
the automorphism $a^{2}$ has the following inductive definition: 
$a^{2} \cdot n_{1}n_{2} \ldots n_{k} = n_{1}n_{2} \ldots n_{k}$ if $n_{1}n_{2}n_{3} \neq 110$
(or if $k \leq 3$), and $a^{2} \cdot 110 n_{4} \ldots n_{k} = 110 \cdot (a^{2} \cdot n_{4} \ldots n_{k}$.
It follows directly that $|a| = 2$. Therefore, $|b| = 2$ and $|c| = 2$ by the above computations.

It follows immediately that $\langle a, b \rangle$ is a dihedral group. One can easily check that
$(ta)^{2} = (ct, tc)$, $(ta)^{4} = ((b,b),(b,b))$, and
(therefore) $(ta)^{8} = 1$. Now
$$ (ab)^{n} = ( (ta)^{n}, c^{n} ),$$
so $(ab)^{8} = 1$. We've shown that $(ta)^{4} \neq 1$, so $(ab)^{4} \neq 1$, implying that $|ab| = 8$. 
Thus, $\langle a, b \rangle \cong D_{8}$.

Next we show that $c$ commutes with $a$ and $b$. The relation $bc = cb$ follows because $c$ and $b$ have 
disjoint support, and
$$ ac = (t,cb) = (t,bc) = ca.$$

Finally, we show that $\langle c \rangle \cap \langle a, b \rangle = 1$. It is enough to show that
$c \notin \langle a, b \rangle$. The elements of $\langle a, b \rangle$ all have the form $(\underline{\hspace{8pt}}, c)$
or $(\underline{\hspace{8pt}}, 1)$. Since $c = (1,b)$ and $b \neq c$, $c \notin \langle a, b \rangle$. It follows
that $\langle c \rangle \cap \langle a, b \rangle = 1$, so $\langle a, b, c \rangle \cong \langle a, b \rangle \times \langle c \rangle$,
as claimed.
\end{proof}

We define $\widehat{\ell}: A - \{ id \} \rightarrow \mathbb{R}^{+}$ by the rule
$$ \widehat{\ell}(a) = 7; \, \, \widehat{\ell}(b) = 7; \, \, \widehat{\ell}(c) = 6 \,\, \widehat{\ell}(t) = 3.$$
We set 
$$T = \{ 1, \alpha_{1}, \alpha_{2}, \ldots, \alpha_{8}, \beta_{1}, \ldots, \beta_{7} \} \cup \{ c, c\alpha_{1}, 
\ldots, c\alpha_{8}, c\beta_{1}, \ldots, c\beta_{7} \}.$$
It is straightforward to check that $T$ satisfies the conditions of Definition \ref{def:T}.

We list the words $\alpha_{1}, \alpha_{2}, \ldots, \alpha_{8}, \beta_{1}, \ldots, \beta_{7}$, their first-line productions,
and the corresponding weights $|t| + |w|$ and $|w_{0}| + |w_{1}|$ in the table below. 
The first-line productions of the remaining words
are obtained from the entries in the table simply by post-multiplying the second coordinates by $b$. 
Similarly, the weights $|t| + |w|$ and $|w_{0}| + |w_{1}|$ can be obtained by adding (respectively)
$6$ and $7$ to the totals below. It follows easily that the length function $\ell$ is admissible.

\begin{figure} [!h]
\begin{center}
\begin{tabular}{|c|c|c|c|c|c|c|c|} \hline
$w$&$(w_{0},w_{1})$&$|t|+|w|$&$|w_{0}|+|w_{1}|$&$w$&$(w_{0},w_{1})$&$|t|+|w|$&$|w_{0}|+|w_{1}|$\\ \hline \hline
$\alpha_{1}$ & $(t,c)$ & $10$ & $9$ & $\beta_{1}$ & $(a,1)$ & $10$ & $7$ \\ \hline
$\alpha_{2}$ & $(ta,c)$ & $17$ & $16$ & $\beta_{2}$ & $(at,c)$ & $17$ & $16$ \\ \hline 
$\alpha_{3}$ & $(tat,1)$ & $24$ & $13$ & $\beta_{3}$ & $(ata,c)$ & $24$ & $23$ \\ \hline
$\alpha_{4}$ & $(tata,1)$ & $31$ & $20$ & $\beta_{4}$ & $(atat,1)$ & $31$ & $20$ \\ \hline
$\alpha_{5}$ & $(tatat,c)$ & $38$ & $29$ & $\beta_{5}$ & $(atata,1)$ & $38$ & $27$ \\ \hline
$\alpha_{6}$ & $(tatata,c)$ & $45$ & $36$ & $\beta_{6}$ & $(atatat,c)$ & $45$ & $36$ \\ \hline
$\alpha_{7}$ & $(tatatat,1)$ & $52$ & $33$ & $\beta_{7}$ & $(atatata,c)$ & $52$ & $43$ \\ \hline
$\alpha_{8}$ & $((ta)^{4},1)$ & $59$ & $40$ &         &             &    &    \\ \hline
\end{tabular}
\end{center}
\end{figure}

\begin{proposition}
The following families of words in the generators $\{ a, b, c, t \}$ are good, where
each box $\Box$ represents an occurrence of a string from $\{ c\alpha_{1}, c\alpha_{2}, c\beta_{2}, c\beta_{3} \}$:
\begin{enumerate}
\item[P0.] $twt$, $w \in T - \{ c\alpha_{1}, c\alpha_{2}, c\beta_{2}, c\beta_{3} \}$;
\item[P1.] $tc\alpha_{1}t \Box tc\alpha_{1}t$; 
\item[P2.] $tc\alpha_{1}t \Box tc\alpha_{2}t$;
\item[P3.] $t \Box t \Box t c\alpha_{1} t \Box t c\beta_{2} t \Box t \Box t$;
\item[P4.] $t \Box t \Box t c\alpha_{2} t \Box t c\alpha_{1} t \Box t \Box t$;
\item[P5.] $t c\alpha_{2} t \Box t c\beta_{2} t$;
\item[P6.] $t c\alpha_{2} t \Box t c\beta_{3} t$;
\item[P7.] $t c\beta_{2} t \Box t c\alpha_{1} t$;
\item[P8.] $t c\beta_{2} t \Box t c\alpha_{2} t$;
\item[P9.] $t \Box t \Box t c\beta_{2} t \Box t c\beta_{3} t \Box t \Box t$;
\item[P10.] $t \Box t \Box t c \beta_{3} t \Box t c\alpha_{2} t \Box t \Box t$;
\item[P11.] $t c\beta_{3} t \Box t c\beta_{2} t$;
\item[P12.] $t c\beta_{3} t \Box t c\beta_{3} t$.
\end{enumerate}
The union $\mathcal{U}$ of the above families is finite.
\end{proposition}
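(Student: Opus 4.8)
The plan is to follow the template of the proof of Proposition \ref{prop:good110}: for each family I would compute an appropriate next-line production of a representative word, and then either exhibit a reducing subword outright (goodness at depth $0$) or locate, in one coordinate of the production, a protected subword coinciding with a word already shown to be good, thus gaining one unit of depth through the relation $\preccurlyeq$. Before starting I would record the one structural fact that controls every computation: reading the first-line productions of the $c$-prefixed words off the table gives
\[
c\alpha_1 \mapsto (t, cb), \quad c\alpha_2 \mapsto (ta, cb), \quad c\beta_2 \mapsto (at, cb), \quad c\beta_3 \mapsto (ata, cb),
\]
so that all four box words share the right coordinate $cb = c\beta_1$, while their left coordinates range over $\{t, ta, at, ata\}$. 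A direct comparison with the table also shows that the four boxes are exactly the $w \in T$ with $|t| + |w| = |w_0| + |w_1|$, every other $w \in T$ satisfying the strict inequality $|t| + |w| > |w_0| + |w_1|$.

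With this in hand, $P0$ is immediate: for $w \in T \setminus \{c\alpha_1, c\alpha_2, c\beta_2, c\beta_3\}$ the strict inequality yields $\ell((tw)_0) + \ell((tw)_1) \le |w_0| + |w_1| < |t| + |w| = |tw|_{\ast}$, so $tw$ is a reducing subword of $twt$ and $twt$ is good at depth $0$; the excluded words are precisely those for which this fails, which is why only they serve as boxes. I would then dispose of the short families $P1$, $P2$, $P5$--$P8$, $P11$, $P12$, each of the form $t\,c\gamma_1\,t\,\Box\,t\,c\gamma_2\,t$. Here the single box occupies the even slot of $t w_1 t w_2 t w_3$, is therefore not transposed, and contributes only its uniform right coordinate $cb$; the two fixed words $c\gamma_1, c\gamma_2$ sit in odd slots and contribute their left coordinates. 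Collecting powers of $t$, the right coordinate of the first-line production equals $(\text{left of }c\gamma_1)\cdot cb \cdot(\text{left of }c\gamma_2)$, and one reads off a protected subword $t\,\xi\,t$ whose interior $\xi$ is $cb$ together with the surrounding $a$'s; in every case $\xi$ is $c\beta_1 (=cb)$ or $c\alpha_3 (=acba)$, both non-box, so $t\xi t \in P0$ and the family is good at depth $1$. The essential point is that the box feeds only the uniform $cb$ into this coordinate, so $\xi$ is independent of the box chosen.

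The main obstacle is the long families $P3$, $P4$, $P9$, $P10$, of the form $t\,\Box\,t\,\Box\,t\,c\gamma_1\,t\,\Box\,t\,c\gamma_2\,t\,\Box\,t\,\Box\,t$: now boxes occupy both odd and even slots and so contaminate both coordinates of the production, the interior produced by the fixed pair $(c\gamma_1,c\gamma_2)$ turns out to be a box element, and depth-$1$ descent to $P0$ genuinely fails for some box choices. The resolution I would carry out is a depth-$2$ descent. One checks that, for every choice of boxes, the first-line production contains in one coordinate a three-block protected subword $t\,X\,t\,Y\,t\,Z\,t$ whose own first-line production isolates a non-box interior, hence a $P0$ word. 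For example, when both flanking boxes end in $a$ the right coordinate has the shape $t\,c\alpha_2\,t\,c\beta_2\,t\,c\beta_2\,t\,a$, and the subword $t\,c\alpha_2\,t\,c\beta_2\,t\,c\beta_2\,t$ produces $(\,\underline{\hspace{8pt}}\,,\, t(c\alpha_3)t\,)$, with $c\alpha_3$ non-box; thus $P3 \succcurlyeq t c\alpha_2 t c\beta_2 t c\beta_2 t \succcurlyeq t c\alpha_3 t$, and the family is good at depth $2$.

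I expect essentially all of the labor to lie in this last step: a finite but delicate case analysis on the four possible values of the boxes at each odd and even slot, tracking the parity of the collected $t$'s and the manner in which the uniform $cb$ fuses with the $a$'s drawn from the box left coordinates to form a non-box interior ($c\beta_1$ or $c\alpha_3$). What must be verified is that this closure always occurs, i.e., that some coordinate of a one- or two-step production invariably presents a non-box protected subword regardless of the boxes. The closing claim, that $\mathcal{U} = \bigcup_{i=0}^{12} P_i$ is finite, is clear, since each $P_i$ consists of a fixed skeleton with finitely many box slots each filled from a four-element set.
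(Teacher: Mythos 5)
Your proposal is correct and follows essentially the same route as the paper's proof: P0 via the strict inequality $|t|+|w| > |w_{0}|+|w_{1}|$ (which holds exactly off the four box words) producing reducing subwords, the two-box families by computing the right coordinate of the first-line production and locating a protected $tc\beta_{1}t$ or $tc\alpha_{3}t$ (hence depth $1$ via P0), and the long families P3, P4, P9, P10 by a case analysis on the outer boxes that either descends in one step or, in the remaining cases, passes through a P5/P8-type word such as $tc\alpha_{2}tc\beta_{2}tc\beta_{2}t$ to reach depth $2$ --- exactly the paper's argument. Your uniform observation that every box produces $(\,\cdot\,,cb)$, so that the right coordinate of a production is $(\text{left of } c\gamma_{1})\cdot cb\cdot(\text{left of } c\gamma_{2})$, is a correct and slightly cleaner packaging of the paper's word-by-word computations.
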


\begin{proof}
We check that each family is made up of good words:
\begin{enumerate}
\item[P0.] In this case, one can check that $tw$ is a reducing word in each $twt$, so the members of this family are good at depth $0$.
\item[P1.] The first-line production of each word $w$ in this family has the form $( \underline{\hspace{8pt}}, tc\beta_{1}t)$. Thus
$t c\beta_{1} t \preccurlyeq w$, so each $w$ is good at depth $1$, since $t c\beta_{1} t \in P0$ is good at depth $0$.
\item[P2.] The first-line production of each word $w$ in this family has the form $( \underline{\hspace{8pt}}, tc\beta_{1}ta)$. Thus
$t c\beta_{1} t \preccurlyeq w$, so each $w$ is good at depth $1$, since $t c\beta_{1} t \in P0$ is good at depth $0$.
\item[P3.] Let $w \in P3$. We consider the first line production $(w_{0}, w_{1})$ of $w$. If the first $\Box$ in $w$ is filled by
either $c\alpha_{1}$ or $c\beta_{2}$, then $w_{1}$ contains a copy of $t c\beta_{1} t$, and so $w$ is good at depth $1$.
If the final $\Box$ in $w$ is filled with either $c\alpha_{1}$ or $c\alpha_{2}$, then $w_{1}$ again contains a copy
of $t c\beta_{1} t$, which makes $w$ good at depth $1$. In all other cases, $w_{1}$ contains
a copy of $t c\alpha_{2} t c\beta_{2} t c\beta_{2} t$, which is good at depth $1$ (see $P5$). Therefore, in this last case,
$w$ is good at depth $2$.
\item[P4.] Let $w \in P4$. We again consider $w_{1}$ in the first line production of $w$. If the first $\Box$ in $w$ is filled
by either $c\alpha_{1}$ or $c\beta_{2}$, then $w_{1}$ contains a copy of $t c\beta_{1}t$, and so $w$ is good at depth $1$ (see $P0$).
If the final $\Box$ in $w$ is filled by either $c\alpha_{1}$ or $c\alpha_{2}$, then $w_{1}$ contains a copy
of $t c\beta_{1} t$, so again $w$ is good at depth $1$. In every other case, $w_{1}$ contains a copy
of $t c \alpha_{2} t c\alpha_{2} t c\beta_{2} t \in P5$. Since the latter word is good at depth $1$, $w$ is good at depth $2$.
\item[P5.] All words $w \in P5$ produce the word $w_{1} = t c \alpha_{3} t$ on the first line. Since the latter word is good at depth
$0$ (see $P0$), we conclude that each $w \in P5$ is good at depth $1$.
\item[P6.] All words $w \in P6$ produce the word $w_{1} = t c \alpha_{3} ta$. Since $w_{R}$ contains the protected subword
$t c \alpha_{3} t$, we conclude that each $w \in P6$ is good at depth $1$.
\item[P7.] All words $w \in P7$ produce the word $w_{1} = at c\beta_{1}t$ on the first line. Since the protected subword
$tc\beta_{1}t$ of $w_{1}$ is in $P0$, we conclude that $w$ is good at depth $1$.
\item[P8.] All words $w \in P8$ produce $w_{1} = at c\beta_{1}ta$ on the first line. It follows that
$w$ is good at depth $1$.
\item[P9.] Let $w \in P9$. If the first $\Box$ is filled by an occurrence of $c \alpha_{2}$ or $c \beta_{3}$, then
$w_{1}$ contains an occurrence of $t c \alpha_{3} t$, which makes $w$ good at depth $1$. If the last $\Box$ is filled
by an occurrence of $c\beta_{2}$ or $c \beta_{3}$, then $w_{1}$ again contains an occurrence of $t c \alpha_{3} t$, making
$w$ good at depth $1$. In all other cases, $w_{1}$ contains an occurrence of
$t c\beta_{2} t c\beta_{2} t c \alpha_{2} t \in P8$, which makes $w$ good at depth $2$.
\item[P10.] Let $w \in P10$. If the first $\Box$ is filled by an occurrence of $c \alpha_{2}$ or $c \beta_{3}$, then
$w_{1}$ contains an occurrence of $t c \alpha_{3} t$, which makes $w$ good at depth $1$. If the last $\Box$ is filled
by an occurrence of $c\beta_{2}$ or $c \beta_{3}$, then $w_{1}$ again contains an occurrence of $t c \alpha_{3} t$, making
$w$ good at depth $1$. In all other cases, $w_{1}$ contains an occurrence of
$t c\beta_{2} t c\alpha_{2} t c \alpha_{2} t \in P8$, which makes $w$ good at depth $2$. 
\item[P11.] All words $w$ in this family produce $w_{1} = atc\alpha_{3}t$ on the first line. It follows that
all words $w \in P11$ are good at depth $1$, since $tc\alpha_{3}t \in P0$ is good at depth $0$.
\item[P12.] All words $w$ in this family produce $w_{1} = atc\alpha_{3}ta$ on the first line. It follows that
all words $w \in P12$ are good at depth $1$, since $tc\alpha_{3}t \in P0$ is good at depth $0$.
\end{enumerate}
Finally, it is clear that each of these families is finite, so their union $\mathcal{U}$ is finite.
\end{proof}

\begin{theorem}
The group $G(A)$ determined by the automaton $A$ with kneading sequence $0(011)^{\omega}$
has subexponential growth. The group $G(A)$ is the iterated monodromy group of a complex post-critically finite quadratic polynomial. 
\end{theorem}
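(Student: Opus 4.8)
The plan is to treat the two assertions in turn. For the statement that $G(A)$ is an iterated monodromy group, I would apply Theorem \ref{thm:kneadingsequence}, so that it suffices to verify that $A$ is planar and has no bad isotropy groups. The kneading sequence $0(011)^{\omega}$ is pre-periodic with $u = 0$ and $v = 011$, and since $011$ is not a proper power, $A$ does not have bad isotropy groups. Planarity (Definition \ref{def:planar}) is a finite check: using $a = (t,c)$, $b = (a,1)$, $c = (1,b)$ together with $t(0w) = 1w$, $t(1w) = 0w$, I would exhibit a linear ordering of the non-trivial states $a, b, c, t$ whose square has restrictions at $0$ and at $1$ that are both cyclic shifts of the ordering, exactly as planarity was verified for the $11(0)^{\omega}$ automaton in the proof of Theorem \ref{thm:110growth}.

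For subexponential growth I would invoke Theorem \ref{thm:intgrowth} with the finite family $\mathcal{U}$ of good words supplied by the preceding proposition. That theorem reduces the problem to exhibiting a constant $M$ for which, at every length $L$, there are at most $M$ reduced $\mathcal{U}$-bad words. Following the proof of Theorem \ref{thm:110growth}, I would first treat bad words of the shape $t w_1 t w_2 t \cdots t w_m t$. Because family $P0$ contains $twt$ for every $w \in T - \{ c\alpha_1, c\alpha_2, c\beta_2, c\beta_3 \}$, each block $w_i$ of such a word must already belong to $\{ c\alpha_1, c\alpha_2, c\beta_2, c\beta_3 \}$.

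The combinatorial heart of the argument is to bound, independently of $m$, the number of filler sequences $w_1, \ldots, w_m$ that avoid all of $P1$ through $P12$. The crucial structural observation is that every one of these forbidden families pins down two fillers occurring two places apart, with the intervening and flanking blocks left free; consequently the constraints never relate adjacent blocks, and the two parity-subsequences $w_1, w_3, w_5, \ldots$ and $w_2, w_4, w_6, \ldots$ may be analyzed independently. Reading off the forbidden consecutive pairs within a single parity-subsequence, and keeping track of which of them ($P3, P4, P9, P10$) are prohibited only in the interior, one finds that the admissible interior transitions collapse to a handful of stable patterns; each parity-subsequence is therefore eventually periodic with bounded period, up to a bounded transient at each of its two ends. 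Hence the number of admissible parity-subsequences of any given length is bounded by an absolute constant, and so is the number of bad words $t w_1 t \cdots t w_m t$. A general bad word has the form $(w_0) t w_1 t \cdots t w_m t (w_{m+1})$ with $w_0, w_{m+1} \in T$, which changes the count by at most a constant factor, and Theorem \ref{thm:intgrowth} then delivers subexponential growth.

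The main obstacle is this last classification. Relative to the $11(0)^{\omega}$ automaton there are four admissible fillers instead of three, and the families $P3, P4, P9, P10$ constrain windows of seven $t$-separated blocks rather than three, so the list of forbidden and interior-forbidden pairs is longer and the bookkeeping correspondingly heavier. The decisive simplification I would rely on is the independence of the two parity-subsequences together with the fact that the interior transition relation has only a few strongly connected pieces, namely constant runs and a short alternation, which forces each parity-subsequence into one stable pattern in its interior and confines all genuine variation to bounded neighborhoods of its endpoints.
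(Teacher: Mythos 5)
Your proposal is correct and follows essentially the same route as the paper: Theorem \ref{thm:kneadingsequence} for the second statement (the paper's explicit planarity witness being the ordering $tacb$, with $(tacbtacb)_{\mid 0}=cbta$ and $(tacbtacb)_{\mid 1}=tacb$), and Theorem \ref{thm:intgrowth} for growth via the same parity-subsequence decomposition, the same treatment of the interior-only families $P3$, $P4$, $P9$, $P10$ as bounded end effects absorbed into $M$, and the same classification of interior transitions (each parity subsequence is a constant run of $c\alpha_{2}$'s or of $c\beta_{2}$'s, or an alternation of $c\alpha_{1}$ and $c\beta_{3}$). The two items you defer as finite checks --- the explicit planar ordering and the transition table extracted from $P1$--$P12$ --- are precisely what the paper computes, so there is no gap.
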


\begin{proof}
We prove the second statement first. By Theorem \ref{thm:kneadingsequence}, it is enough to show that
the automaton is planar (the other conditions being obvious). The planarity of $A$ follows from the equalities
$(tacbtacb)_{\mid 0} = cbta$
and
$(tacbtacb)_{\mid 1} = tacb$, both valid in $(A-\{ id \})^{\ast}$.

We turn to a proof of the first statement. It is sufficient to 
show that there is $M>0$ such that, for any $L>0$, there are at most $M$ $\mathcal{U}$-bad reduced words
of length exactly $L$.

We first consider reduced words of the form $tw_{1}tw_{2}t \ldots tw_{m}t$. Indeed, it is
sufficient to consider words of this form, since a general reduced word has the form $(w_{0})tw_{1}t \ldots t w_{m}t (w_{m+1})$,
and such a word is $\mathcal{U}$-bad if and only if $tw_{1}t \ldots t w_{m}t$ is. Thus, the total number of bad words of the form
$(w_{0})tw_{1}t \ldots t w_{m} t (w_{m+1})$ is a constant multiple of the number of bad words of the form
$tw_{1}t\ldots t w_{m} t$.

As in the proof of Theorem \ref{thm:110growth}, we will consider the sequences $w_{1}, w_{3}, w_{5}, \ldots$
and $w_{2}, w_{4}, w_{6}, \ldots$. The description of subfamily $P0$ shows that each $w_{i}$ ($i=1, \ldots, m$)
must be taken from $\{ c\alpha_{1}, c\alpha_{2}, c\beta_{2}, c\beta_{3} \}$. The descriptions 
of the remaining subfamilies $P1 - P12$ 
show
(essentially; see below) that certain words from $\{ c\alpha_{1}, c\alpha_{2}, c\beta_{2}, c\beta_{3} \}$ must not be followed
by certain other words from $\{ c\alpha_{1}, c\alpha_{2}, c\beta_{2}, c\beta_{3} \}$ in the sequences $w_{1}, w_{3}, \ldots$
and $w_{2}, w_{4}, \ldots$. Thus, for instance, the description of $P1$ implies that $c\alpha_{1}$ cannot follow $c\alpha_{1}$
in $w_{1}, w_{3}, \ldots$ or $w_{2}, w_{4}, \ldots$.

The exceptions are $P3$, $P4$, $P9$, and $P10$. These subfamilies also forbid one word from following another, except 
possibly at the immediate end
or beginning of $w$. We can ignore this distinction for the sake of this argument, however, since 
these exceptions allow only a finite amount of
variation at the end and beginning of $w$, and this variation will simply increase the uniform bound $M$.

With this understanding, we can make the following observations. In $w_{1}, w_{3}, \ldots$ and $w_{2}, w_{4}, \ldots$
\begin{enumerate}
\item $c\alpha_{1}$ can be followed only by $c\beta_{3}$;
\item $c\alpha_{2}$ can be followed only by $c\alpha_{2}$;
\item $c\beta_{2}$ can be followed only by $c\beta_{2}$;
\item $c\beta_{3}$ can be followed only by $c\alpha_{1}$.
\end{enumerate}
Thus, modulo the above considerations, the only possibilities for the sequences $w_{1}, w_{3}, \ldots$
and $w_{2}, w_{4}, \ldots$ are those that alternate between $c\alpha_{1}$ and $c\beta_{3}$
and constant sequences of either $c\alpha_{2}$'s or $c\beta_{2}$'s. Thus the number of
possible sequences $tw_{1}t \ldots t w_{m} t$ is bounded above, by a constant independent of $m$. The existence of the uniform bound $M$ now follows
easily, and 
Theorem \ref{thm:intgrowth} establishes that $G(A)$ has subexponential growth. 
\end{proof}

\section{A group $G(A)$ with no admissible length function} \label{section:final}

We consider the kneading automaton with kneading sequence $01(10)^{\omega}$. We label the active state $t$, and label the remaining states
$a$, $b$, and $c$, in the order that they are encountered while tracing directed edges backward from the active state in the Moore diagram.
Thus $a = (t,1)$, $b=(c,a)$, and $c=(1,b)$. Our goal in this section is to sketch a proof that $G(A)$ has no admissible length function.

\begin{proposition}
The group $G(A)$ admits no admissible length function.
\end{proposition}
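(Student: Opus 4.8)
The plan is to assume, toward a contradiction, that $\ell$ is admissible, induced by a weight assignment with $|a|,|b|,|c|,|t|>0$, and then to exhibit two explicit elements of $\langle a,b,c\rangle$ whose admissibility inequalities cannot both hold. The whole argument turns on one structural asymmetry of this automaton. Since $a=(t,1)$, passing to the left coordinate sends $a$ to the active state $t$, so the left word of a production accumulates one (uncancellable) copy of $t$ for every $a$ in the original word; by contrast $b_{\mid 1}=a$, $c_{\mid 1}=b$ and $a_{\mid 1}=1$, so the right word of a production never involves $t$ and stays inside $\langle a,b\rangle$. Consequently an element represented by a word \emph{with no} $a$'s has a cheap left production but may have an expensive right production, forcing a \emph{lower} bound on $|t|$; an element whose geodesics must contain \emph{two} $a$'s pays $2|t|$ on the left, forcing an \emph{upper} bound on $|t|$. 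I will pick one element of each type so that the two bounds collide.

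As preliminary bookkeeping I would record that $a,b,c$ each have order $2$ and that $\langle a,b,c\rangle$ embeds in $\langle t,c\rangle\times\langle a,b\rangle\cong D_{4}\times D_{4}$ via $h\mapsto(h_{\mid 0},h_{\mid 1})$ (so in particular is finite), together with the two coordinate computations
\[
cbc=(c,bab),\qquad abacb=(tctc,aba).
\]
Writing $\lambda(h)$ for the least weight of a word in $a,b,c$ representing $h$, the two witnesses are $cbc$ and $abacb$. For $cbc$ the left production is just $c$ (no $a$'s, hence no $t$'s) and the right production is the representative of $bab$, so admissibility reads $|t|+\lambda(cbc)\ge |c|+\lambda(bab)$. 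For $abacb$ the left production reduces to $tctc$, contributing $2|t|+2|c|$, while the right production is the representative of $aba$, so admissibility reads $|t|+\lambda(abacb)\ge\bigl(2|t|+2|c|\bigr)+\lambda(aba)$. Substituting the (weight-independent) values
\[
\lambda(bab)=|a|+2|b|,\quad \lambda(aba)=2|a|+|b|,\quad \lambda(cbc)=|b|+2|c|,\quad \lambda(abacb)=2|a|+2|b|+|c|
\]
turns these into $|t|\ge |a|+|b|-|c|$ and $|t|\le |b|-|c|$. Their difference is $|a|$, so $|a|\le 0$, contradicting $|a|>0$.

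The step I expect to be the real work is justifying the quantities above \emph{for every} admissible weighting: that the displayed values of $\lambda$ are genuinely minimal, that the chosen representatives $w_{cbc}$ and $w_{abacb}$ contain exactly $0$ and $2$ occurrences of $a$ respectively, and that their left productions contribute exactly $|c|$ and $2|c|$ worth of the letter $c$. Because the set $T$ of geodesic representatives depends on the unknown weights, none of this can be read off a single word; I would instead establish it uniformly by a parity argument. Mapping into the abelianizations of $\langle a,b\rangle$ and $\langle t,c\rangle$ (each $\cong(\Z/2\Z)^{2}$) pins down, for each of the four elements $bab,aba,cbc,abacb$, the parity of the number of $a$'s, $b$'s and $c$'s in \emph{any} representing word; this forces the minimal letter-multisets to be exactly those of $bab,aba,cbc,abacb$, and in particular forces the counts of $a$ and of $c$ entering the two inequalities (together with the observation that the only reduced arrangements of $\{t,t,c,c\}$ equal to $(tc)^{2}$ are $tctc$ and $ctct$, both with two isolated $c$'s). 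Once this uniform minimality is in hand the contradiction above holds for all positive weights, so no admissible length function exists.
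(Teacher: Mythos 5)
Your proof is correct, and I have verified its computations: $cbc=(c,bab)$ and $abacb=(tctc,aba)$ are the right string productions, the forced values of $\lambda$ are right, and the two inequalities $|t|\ge |a|+|b|-|c|$ and $|t|\le |b|-|c|$ do pin $|a|\le 0$. The paper uses the same underlying mechanism — a mod-$2$ parity homomorphism $\langle a,b,c\rangle\rightarrow(\mathbb{Z}/2\mathbb{Z})^{3}$ to force the geodesic representatives in $T$, then the admissibility inequality on explicit witnesses — but it needs only \emph{one} witness: after showing $c$, one of $baba/abab$, and one of $cbacb/cbcab$ lie in $T$, the single production $cbacb=(ctc,baba)$ gives $|t|+|a|+2|b|+2|c|\ge |t|+2|c|+2|a|+2|b|$, i.e.\ $|a|\le 0$, in one step. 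Your two-witness pincer is a genuine variant: your witnesses are shorter, and uniqueness of the minimal word is cleaner for $cbc$, $bab$, $aba$ (each has a unique minimizer for every weighting), at the cost of a four-fold tie for the element $abacb$, whose minimizers are $abacb$, $abcab$, $bacba$, $bcaba$ — a point your parenthetical about arrangements of $\{t,t,c,c\}$ correctly anticipates, since all four yield left string $tctc$ or $ctct$ and right string $aba$, hence the identical inequality regardless of which one $T$ contains. One caveat: as literally written, the claim that abelianization ``forces the minimal letter-multisets'' overstates what parity gives. Parity only constrains letter counts mod $2$, so you must separately exclude parity-compatible words that omit a letter entirely, since such a word could be lighter under an adversarial weighting (e.g.\ a word in $\{a,b\}^{\ast}$ of weight $2k|a|+|b|$ could undercut $|b|+2|c|$ when $|a|$ is tiny). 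This is handled by coordinate exclusions rather than parity: $cbc\notin\langle a,b\rangle$ because second coordinates of $\langle a,b\rangle$ lie in $\{1,a\}$ while $cbc=(c,bab)$; likewise $aba,abacb\notin\langle b,c\rangle$ (first coordinates lie in $\{1,c\}$) and $bab,abacb\notin\langle a,c\rangle=\{1,a,c,ac\}$; plus finitely many non-equality checks such as $b\neq cbc$, $aba\neq cbc$, and $a\neq bab$ (note, amusingly, that $cac=a$, so $a$ and $c$ commute and words in $\{a,c\}^{\ast}$ collapse). All of these are routine in the finite group $H\hookrightarrow\langle t,c\rangle\times\langle a,b\rangle$, and are of exactly the kind the paper itself waves off with ``we omit the details,'' so the gap is cosmetic rather than structural.
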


\begin{proof}
Choose an arbitrary $\widehat{\ell}: A - \{ id \} \rightarrow \mathbb{R}^{+}$ and an arbitrary $T$ satisfying the conditions
from Definition \ref{def:T}. 

It turns out that: (1) the word $c$ must be in $T$; (2) one of the words $cbacb$, $cbcab$ must be in $T$, and
(3) one of the words $baba$, $abab$ must be in $T$. To prove this, 
it helps to use the homomorphism $\phi: \langle a, b, c \rangle \rightarrow \left( \mathbb{Z}/2\mathbb{Z} \right)^{3}$,
where $\phi(a) = (1,0,0)$, $\phi(b) = (0,1,0)$, and $\phi(c) = (0,0,1)$. One establishes that $\phi$ is well-defined as follows. The
subgroup $N= \langle abab, bcbc \rangle$ is central in $\langle a, b, c \rangle$, any two of the generators $a$, $b$, $c$ commute modulo $N$, and
the set $\{ 1, a, b, c, ab, ac, bc, abc \}$ is a transversal for $N$ in $\langle a, b, c \rangle$. The existence of $\phi$ now follows directly from the
First Isomorphism Theorem. One proves (1), (2), and (3) by first arguing that every other representative $w'$ of the word $w$ in question must have
at least as many occurrences of each letter as $w$, and then arguing that no other permutations of the letters of $w$ can represent the same group element.
We omit the details.     

Suppose, without loss of generality, that $\{ c, baba, cbacb \} \subseteq T$, and assume that the length function 
$\ell: (A-\{ id \})^{\ast} \rightarrow \mathbb{R}^{+} \cup \{ 0 \}$ is admissible. The first-line production of $cbacb$ is $(ctc, baba)$, so
\begin{align*}
|t| + 2|c| + 2|b| + |a| &\geq |t| + 2|a| + 2|b| + 2|c|,
\end{align*}
from which we conclude that $|a| \leq 0$. This is a contradiction.
\end{proof}


\bibliography{growth2}
\nocite{*}
\bibliographystyle{plain}

\end{document}